\newtheorem{defn}{Definition}[section]
\newtheorem{lemma}[defn]{Lemma}
\newtheorem{thm}[defn]{Theorem}
\newtheorem{prop}[defn]{Proposition}
\theoremstyle{remark}
\newtheorem{rem}[defn]{Remark}
\newtheorem{ex}[defn]{Example}
\newcommand{\h}{{\cal H}}
\newcommand{\mn}{\mathbb N}
\def\h{{\cal H}}
\def\bp{\begin{proof}}
\def\ep{\end{proof}}
\def\<{\langle}
\def\>{\rangle}
\newcommand{\la}{\langle}
\newcommand{\ra}{\rangle}
\def\newin {\,\kern-0.4em\in\kern-0.15em}
\title{Dual frames compensating for erasures - \\ non-canonical case}
\author{Ljiljana Aramba\v si\' c$^{\rm \, a}$ and Diana Stoeva$^{\rm \, b}$\\ \\
$^{\rm \, a}$ {\small Department of Mathematics, Faculty of Science, University of Zagreb,} \\
{\small Bijeni\v cka cesta 30, 10000 Zagreb, Croatia}\\
{\small Email: arambas@math.hr}\\
$^{\rm \, b}$ {\small Institute of Telecommunications, Technische Universit\"{a}t Wien ,}\\{\small Gusshausstrasse 25/E 389, 1040 Vienna, Austria}\\
{\small Email: diana.stoeva@tuwien.ac.at}
}
\begin{document}
\maketitle \pagestyle{myheadings}


\begin{abstract}
In this paper we study the problem of recovering a signal from frame coefficients with erasures. Suppose that erased coefficients are indexed by a finite set $E$.
Starting from a frame $\seq[x]$ and its arbitrary dual frame, we give sufficient conditions for constructing a dual frame of $(x_n)_{n\in E^c}$ so that the perfect reconstruction can be obtained from the preserved frame coefficients.
The work is motivated by methods using the canonical dual frame of $\seq[x]$, which however do not extend automatically to the case when the canonical dual is replaced with another dual frame.
The differences between the cases when the starting dual frame is the canonical dual and when it is not the canonical dual are investigated.
We also give several ways of computing a dual of the reduced frame, among which we are the most interested in the iterative procedure for computing this dual frame.
Computational tests show that in certain cases the iterative algorithm performs faster than the other considered procedures.
\end{abstract}

{\bf Keywords:}
frame, erasure, reconstruction, dual frame, canonical dual

{\bf MSC 2010:}  Primary 42C15; Secondary 47A05, 42-04


\section{Introduction and notation} \label{intr}

Throughout the paper, $\h$ usually denotes a separable infinite-dimensional Hilbert space. However, with appropriate adjustments, all results  also apply to
finite-dimensional Hilbert spaces and justify the examination of computational efficiency of the provided algorithms in the finite-dimensional case. 
By $\Bbb B(\h)$ we denote the algebra of all bounded linear operators on $\h.$

A sequence $(x_n)_{n=1}^{\infty}$ in
$\h$ is a \emph{frame for $\h$} \cite{DS} if there exist positive constants $A$ and $B$ such that
\begin{equation}\label{frame def}
A\|h\|^2\leq \sum_{n=1}^{\infty}|\la h,x_n\ra |^2\leq B\|h\|^2,\quad h \in \h.
\end{equation}
If $A$ and $B$ can be chosen to be $1$,  $(x_n)_{n=1}^{\infty}$ is called a \emph{Parseval frame for $\h$}.
A sequence $(x_n)_{n=1}^{\infty}$ in $\h$ is a \emph{Bessel sequence in $\h$} if it satisfies the right hand side inequality in \eqref{frame def}.
For a Bessel sequence $(x_n)_{n=1}^{\infty}$ in $\h$, one defines the \emph{analysis operator} $U: \h\rightarrow \ell^2$ by $Uh=(\langle h,x_n\rangle)_{n=1}^\infty,\,h\in \h$. The adjoint operator $U^*$ is given by $U^*((c_n)_{n=1}^\infty)=\sum_{n=1}^{\infty}c_nx_n$ for $(c_n)_{n=1}^\infty\in \ell^2$.

Let $(x_n)_{n=1}^\infty$ be a frame for $\h$.
Then there exists a frame $\seq[z]$ for $\h$ so that the reconstruction formula
\begin{equation*}\label{defaltdual}
h=\sum_{n=1}^{\infty}\la h,x_n\ra z_n,\quad h \in \h,
\end{equation*}
holds; such $\seq[z]$ is called a \emph{dual frame of $\seq[x]$}.
The \emph{frame operator} $U^*U\in \Bbb B(\h)$ is invertible (i.e., bounded and bijective) and the sequence $(y_n)_{n=1}^\infty=((U^*U)^{-1}x_n)_{n=1}^{\infty}$ is a dual frame of $\seq[x]$, called \emph{the canonical dual frame} (in short, \emph{the canonical dual})
\emph{of $\seq[x]$}.
When the frame $(x_n)_{n=1}^\infty$ is not a Schauder basis for $\h$ (called an \emph{overcomplete} or \emph{redundant frame}), there are other dual frames in addition to the canonical dual. For more on general frame theory we refer e.g. to \cite{CKu, Cbook, HL, Hbook, KC}.

The reconstruction property of frames and the possibility for redundancy are some of the main reasons which make frames so important and with wide applications (e.g., in signal processing, data compression, optics, signal detection, and many other areas).
The redundancy makes possible a perfect reconstruction from frame coefficients with erasures, that is, when some coefficients are lost or damaged (which is often the case e.g. in signal transmission), assuming that preserved coefficients $\la h, x_n\ra$ arise from frame elements $x_n$ which span the space $\h.$
Related to this, we consider the following definition, introduced in \cite{LS}:

Let $(x_n)_{n=1}^\infty$ be a frame for $\h$.
It is said that a finite set of indices $E=\{i_1, i_2, \ldots,i_k\}\subset \mn$ satisfies the \emph{minimal redundancy condition} (in short, \emph{MRC}) \emph{for} $(x_n)_{n=1}^\infty$ if the linear span of the set $\{x_n:n\in E^c\}$ is dense in $\h,$ that is, if
\begin{equation}\label{def_mrp}
\overline{\text{span}}\,\{x_n:n\in E^c\}=\h.
\end{equation}
 As observed in \cite{LS}, based on \cite[Theorem~5.4.7]{Cbook},   \eqref{def_mrp} holds if and only if $(x_n)_{n\in E^c}$ is a frame for $\h$. 
This means that, for such a set $E$, if we take a dual frame $(v_n)_{n\in E^c}$ of $(x_n)_{n\in E^c}$, then for each $h\in \h$ it holds  $h=\sum_{n\in E^c}\la h,x_n\ra v_n $, that is, $h$ can be reconstructed even if we do not know the coefficients $(\la h,x_n\ra)_{n\in E}$.
This also means that, if $E$ does not satisfy the MRC for $\seq[x],$ then there is a nonzero vector $h\in \h$ orthogonal to $\{x_n:n\in E^c\},$
so $h$ cannot be reconstructed by using only the coefficients $(\la h,x_n\ra)_{n\in E^c}$. This explains the word \emph{minimal} in the
definition of the MRC.

Let $(x_n)_{n=1}^\infty$ be a frame for $\h$ and let the set of frame coefficients
\sloppy $(\la h,x_n\ra)_{n\in  E}$ of $h\in\h$ be lost, where $E$ is a finite set satisfying the \emph{MRC} for $(x_n)_{n=1}^\infty$. There are several approaches in the literature aiming recovery of $h$. One of them focuses on recovery of the lost coefficients $(\la h,x_n\ra)_{n\in  E}$, see, e.g., the bridging method in \cite{LS} or \cite{HanSun}.
Another approach deals with methods for inversion of the partial reconstruction operator  $R_E\in \Bbb B(\h)$ determined by  $R_Eh=\sum_{n\in E^c}\<h,x_n\>z_n$ (in cases when it is invertible), where $(z_n)_{n=1}^\infty$ means a dual frame of $(x_n)_{n=1}^\infty$, see \cite{LS}.
A third approach focuses on constructions of a dual frame of the reduced frame $(x_n)_{n\in E^c}$. This approach is considered in \cite{AB}, where the authors construct the canonical dual $(x_n)_{n\in E^c}$ based on the canonical dual of $(x_n)_{n=1}^\infty$.
Of course, a natural way to determine the canonical dual of $(x_n)_{n\in E^c}$  would be to use the definition of the canonical dual, which however might not be very efficient computationally in high dimensional spaces as it involves inversion of the respective frame operator.  This has been a motivation behind searching for methods that reduce the dimension, in particular focusing on the erasure set $E$, e.g. \cite{AB, LS}.

Instead of finding ways how to perfectly reconstruct a vector when some coefficients are lost in the process of transmission, some authors deal with the problem of finding optimal dual frames for erasures, that is, those dual frames which minimize the error of the reconstruction from the preserved coefficients (see \cite{HP, LH, LoH}). We also refer to \cite{CK} where the authors classify frames that are robust to a fixed number of erasures,
as well as to  \cite{ACM, ABfull} where full spark frames were discussed (a frame  $(x_n)_{n=1}^N$ for an $r$-dimensional Hilbert space is full spark if every set of indices $E$ with cardinality at most $N-r$ satisfies the MRC for $(x_n)_{n=1}^N$).

In this paper we focus on the aforementioned third approach for recovery, namely, on constructions of a dual frame of the reduced frame $(x_n)_{n\in E^c}$.
One of our main purposes is the construction of a dual frame $(v_n)_{n\in E^c}$ of the reduced frame $(x_n)_{n\in E^c}$ (not necessarily the canonical dual) based on an arbitrary dual frame of $(x_n)_{n=1}^\infty$. This is a sequel of the research from \cite{AB} where the same problem was investigated with a restriction that a starting dual frame of $(x_n)_{n=1}^\infty$ is the canonical dual, and where the constructed dual frame of the reduced frame is the canonical one.
In general, the canonical dual of a given frame
has some nice properties
(for example, it minimizes the coefficients in frame expansions via the given frame \cite[Lemma VIII]{DS}).
However, it might not be very appropriate to be used in applications as it might be computationally inefficient and, for example, in the case of Gabor frames it may fail some other nice desired properties like compactness, smoothness, and good time-frequency localization.
For this reason, explicit constructions and characterizations
 of other dual frames with some desired properties (in particular, without involving an operator inversion) have been of big interest in the last two decades in frame theory (see, e.g., \cite{BGS, CGabor, CKdualGabor, CKK, GJ, sc}).
This was also a motivation behind our work on construction of non-canonical dual frames for the erasure problem and on comparison with the canonical case.

\vspace{.1in}

The paper is organized as follows.

In Section~\ref{section2} we discuss the canonical case, that is, we start with a frame $\seq[x]$, its canonical dual
 $\seq[y]$, and a finite set of indices $E$ satisfying the MRC for $\seq[x]$ (and automatically for $\seq[y]$). We summarize results obtained in \cite{AB} about different ways for computing the canonical dual
  of the reduced frame $(x_n)_{n\in E^c}$, among which we are the most interested in the iterative procedure for computing this dual frame.
In the case of a finite frame $(x_n)_{n=1}^N$, we implemented the considered algorithms and tested them for efficiency in time-computation. More precisely, we compared the time for computation of the canonical dual of $(x_n)_{n\in E^c}$ via the code  based on the iterative algorithm (Proposition \ref{iterations_canonical}), via the code based on the procedure involving matrix inversion (Theorem \ref{thm2.5}(ii)), and via the pseudo-inverse approach using the MATLAB pinv-function. The tests noted in Section 4 show that in certain cases the iterative procedure is the fastest one among the considered approaches.

In Section~\ref{section3} we investigate the non-canonical case - we start with an arbitrary dual frame $\seq[z]$ for $\seq[x].$ The  question we deal with is: can we, by imitating the canonical case, obtain analogous ways for computing a dual frame of the reduced frame? In this case the assumption that a finite set of indices $E$ satisfies the MRC for $\seq[x]$ does not guarantee that adaptations of formulas from the canonical case are well defined; this is not surprising, since the relation between a frame and its canonical dual is much stronger than the relation between a frame and its arbitrary dual. However, with some additional assumptions, the adapted formulas are well defined and they give us a dual frame for $(x_n)_{n\in E^c}$, not necessarily the canonical one.
Again, we implemented and tested the considered algorithms 
for efficiency in time-computation. In particular, we also compared the performance of the algorithms
in the canonical and in the non-canonical case
for the same frame and the tests show that in certain cases the use of a non-canonical dual can be faster than the use of the canonical one. Results from the tests are noted in Section 4 and they justify our interest to the non-canonical case.

For convenience of the writing and without loss of generality, through the entire paper we will write $E=\{1,2,\ldots,k\}$ instead of $E=\{i_1, i_2, \ldots,i_k\}$.
The identity operator on $\h$ will be denoted by $I_{\h}$ or simply by $I$ if there is no risk of confusion. For $x,y\in \h$, $\theta_{y,x}$ will denote
 the rank one operator defined by $\theta_{y,x}(h)=\la h,x\ra y,$ which is  clearly in $\Bbb B(\h)$.

\section{Construction by using the canonical dual}\label{section2}

Let $\seq[x]$ be a frame for $\h$ and let $E$ be a set of indices that satisfies the MRC for $(x_n)_{n=1}^\infty.$ In \cite{AB} the authors use the canonical dual
 $\seq[y]$ of $\seq[x]$ to construct the canonical dual
  $(v_n)_{n\in E^c}$ of $(x_n)_{n\in E^c}$.
In the following theorem we summarize results obtained in
\cite[Proposition~2.4, Theorems~2.5 and 2.12, and (2.29)]{AB}, 
where two ways of presenting the vectors $v_n,n\in E^c,$ are given.

\begin{thm} \label{thm2.5}
Let $(x_n)_{n=1}^\infty$ be a frame for $\h$ and let $(y_n)_{n=1}^\infty$ denote the canonical dual of $(x_n)_{n=1}^\infty$.
Suppose that a finite set of indices $E=\{1,2,\ldots,k\}$ satisfies the MRC for $(x_n)_{n=1}^\infty$. Then the following holds:

 \item[{\rm (i)}] The canonical dual 
  $(v_n)_{n\in E^C}$ of $(x_n)_{n\in E^C}$  can also be written as
\begin{equation}\label{candual-operator}
v_n=(I-\sum_{i=1}^k\theta_{y_{i},x_{i}})^{-1}y_n,\quad n\in E^c.
\end{equation}

\item[{\rm (ii)}]
For each $n \in E^c$,   the numbers $\alpha_{n1}, \alpha_{n2},\ldots, \alpha_{nk},$
given  by the formula
\begin{equation}\label{XVIII}
{\small
\left[
\begin{array}{c}
\alpha_{n1}\\
\alpha_{n2}\\
\vdots\\
\alpha_{nk}
\end{array}
\right]=
\left(\left[\begin{array}{cccc}
\langle y_{1},x_{1}\rangle&\langle y_{2},x_{1}\rangle&\ldots&\langle y_{k},x_{1}\rangle\\
\langle y_{1},x_{2}\rangle&\langle y_{2},x_{2}\rangle&\ldots&\langle y_{k},x_{2}\rangle\\
\vdots&\vdots& &\vdots\\
\langle y_{1},x_{k}\rangle&\langle y_{2},x_{k}\rangle&\ldots&\langle y_{k},x_{k}\rangle
\end{array}
\right]-I\right)^{-1}
\left[
\begin{array}{c}
\langle y_n,x_{1}\rangle\\
\langle y_n,x_{2}\rangle\\
\vdots\\
\langle y_n,x_{k}\rangle
\end{array}
\right]
}
\end{equation}
are well defined and
the sequence $(v_n)_{n\in E^C}$ determined by
\begin{equation}\label{candual-matrix}
v_n=y_n-\sum_{i=1}^k\alpha_{ni}y_{i},\quad n\in E^c,
\end{equation}
  is the canonical dual
  of $(x_n)_{n\in E^C}$.
\end{thm}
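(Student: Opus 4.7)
The plan is to exploit the decomposition of the frame operator $S=U^{*}U$ of $(x_n)_{n=1}^{\infty}$ into the contribution from the erased indices and the contribution from the preserved indices, and then to read off both formulas from a single operator identity.

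First I would set $S_E=\sum_{i=1}^{k}\theta_{x_i,x_i}$ and $S_{E^c}=\sum_{n\in E^c}\theta_{x_n,x_n}$. The MRC together with the cited consequence of \cite[Theorem~5.4.7]{Cbook} gives that $(x_n)_{n\in E^c}$ is a frame, so $S_{E^c}$ is bounded, positive and boundedly invertible on $\h$, and the canonical dual of $(x_n)_{n\in E^c}$ equals $v_n=S_{E^c}^{-1}x_n$. Clearly $S=S_E+S_{E^c}$, so $S_{E^c}=S(I-S^{-1}S_E)$. Using $S^{-1}\theta_{u,v}=\theta_{S^{-1}u,v}$ on each rank-one summand, I rewrite $S^{-1}S_E=\sum_{i=1}^{k}\theta_{S^{-1}x_i,x_i}=\sum_{i=1}^{k}\theta_{y_i,x_i}$. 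Since $S$ and $S_{E^c}$ are invertible on $\h$, the factor $I-\sum_{i=1}^{k}\theta_{y_i,x_i}$ is also invertible, and then
\[
v_n=S_{E^c}^{-1}x_n=\Bigl(I-\sum_{i=1}^{k}\theta_{y_i,x_i}\Bigr)^{-1}S^{-1}x_n=\Bigl(I-\sum_{i=1}^{k}\theta_{y_i,x_i}\Bigr)^{-1}y_n,
\]
which is (i).

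For (ii) I would turn the identity in (i) around. Writing $(I-\sum_{i=1}^{k}\theta_{y_i,x_i})v_n=y_n$ and using $\theta_{y_i,x_i}v_n=\langle v_n,x_i\rangle y_i$ yields $v_n=y_n+\sum_{i=1}^{k}\langle v_n,x_i\rangle y_i$. Setting $\alpha_{ni}:=-\langle v_n,x_i\rangle$ gives the claimed representation \eqref{candual-matrix}. To derive the linear system \eqref{XVIII}, I take the inner product of \eqref{candual-matrix} with $x_j$ for $j=1,\dots,k$, obtaining $\langle v_n,x_j\rangle=\langle y_n,x_j\rangle-\sum_{i=1}^{k}\alpha_{ni}\langle y_i,x_j\rangle$, and then substitute $\langle v_n,x_j\rangle=-\alpha_{nj}$; rearranging the terms produces exactly the matrix equation \eqref{XVIII}.

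The main obstacle is to justify that the $k\times k$ matrix $G-I$ in \eqref{XVIII}, where $G_{ji}=\langle y_i,x_j\rangle$, is invertible, so that the coefficients $\alpha_{ni}$ are well defined. I would handle this by reducing it to the invertibility of $I-\sum_{i=1}^{k}\theta_{y_i,x_i}$ proved in (i). Concretely, suppose $(\beta_1,\dots,\beta_k)^{\top}\in\ker(G-I)$ and set $z=\sum_{i=1}^{k}\beta_i y_i\in\h$. The $j$-th row of the homogeneous system reads $\sum_{i=1}^{k}\beta_i\langle y_i,x_j\rangle=\beta_j$, i.e.\ $\langle z,x_j\rangle=\beta_j$. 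Hence $\sum_{i=1}^{k}\theta_{y_i,x_i}(z)=\sum_{i=1}^{k}\langle z,x_i\rangle y_i=\sum_{i=1}^{k}\beta_i y_i=z$, so $(I-\sum_{i=1}^{k}\theta_{y_i,x_i})z=0$. Invertibility of this operator forces $z=0$, whence $\beta_j=\langle z,x_j\rangle=0$ for all $j$, and $G-I$ is injective on $\mathbb{C}^k$, hence invertible. This simultaneously shows that the $\alpha_{ni}$ given by \eqref{XVIII} are well defined and coincide with $-\langle v_n,x_i\rangle$, completing the proof of (ii).
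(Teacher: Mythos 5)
Your proof is correct. Note that the paper itself does not actually prove Theorem \ref{thm2.5}: the statement is quoted from \cite{AB}, so your argument supplies a self-contained proof rather than reproducing one from the text. Your route for (i) --- writing the reduced frame operator as $S_{E^c}=S(I-\sum_{i=1}^k\theta_{y_i,x_i})$ --- is the all-at-once version of the computation the paper performs one index at a time in the proof of Proposition \ref{iterations_canonical}, where the identity $S-\theta_{x_1,x_1}=S(I-\theta_{y_1,x_1})$ is used and then iterated; doing it in a single step also yields the invertibility of $I-\sum_{i=1}^k\theta_{y_i,x_i}$ for free, since it equals $S^{-1}S_{E^c}$. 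For (ii), the paper's non-canonical analogue (Proposition \ref{prop1}) derives the matrix formula by passing to the analysis operators $U,V:\h\to\mathbb{C}^k$ and invoking Lemma \ref{leminvert} (invertibility of $I-V^*U$ is equivalent to that of $UV^*-I$, with an explicit inverse formula); your elementary kernel argument --- lifting a kernel vector $\beta$ of the $k\times k$ matrix to $z=\sum_{i=1}^k\beta_i y_i$ in the kernel of $I-\sum_{i=1}^k\theta_{y_i,x_i}$ --- establishes the same invertibility without that lemma, and your identification $\alpha_{ni}=-\langle v_n,x_i\rangle$, obtained by pairing \eqref{candual-matrix} with the vectors $x_j$, is a clean way to see that \eqref{XVIII}--\eqref{candual-matrix} and \eqref{candual-operator} describe the same sequence. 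Both approaches are sound; the operator-factorization viewpoint transfers more directly to the non-canonical setting of Section \ref{section3}, while yours is more elementary and self-contained.
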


In $r$-dimensional space $\h$, the construction of $(v_n)_{n\in E^c}$ via the inverse of the respective frame operator or via (\ref{candual-operator}) involves inversion of an $r\times r$ matrix, which 
would not be very efficient for computational purposes when $r$ is big.
This is a motivation behind the search for more efficient constructions, e.g. the construction via (\ref{XVIII})-(\ref{candual-matrix}) that involves inversion of just $k\times k$ matrix and thus expected to be more efficient in case $k$ is much smaller than $r$.
In \cite[Theorem~2.14]{AB} the authors also present an iterative procedure for computing the inverse of the operator $I-\sum_{i=1}^k\theta_{y_{i},x_{i}}$ from (\ref{candual-operator}) by computing inverses of simple operators of the form $I-\theta_{y,x}$. The procedure uses the well-known fact that $I-\theta_{y,x}$ is invertible on $\h$  if and only if $\la y,x\ra\neq 1$, and in the case of invertibility, the inverse is given by
\begin{equation}\label{xy-inverse}
(I-\theta_{y,x})^{-1}=I+\frac{1}{1-\<y,x\>}\theta_{y,x}.
\end{equation}
This lead us to an iterative procedure for
determining the canonical dual of $(x_n)_{n\in E^c}.$ Moreover, if $E=\{1,2,\ldots,k\},$ then the $j$-th iteration, for each $j=1,\ldots,k,$ gives the canonical dual
of $(x_n)_{n={j+1}}^\infty.$
In the following proposition we state this result, but we also include a direct proof of the statement, without using results from \cite{AB}.

\begin{prop}\label{iterations_canonical}
Let $(x_n)_{n=1}^\infty$ be a frame for $\h$ and let $(y_n)_{n=1}^\infty$ denote the canonical dual
 of $(x_n)_{n=1}^\infty$.
Suppose that the set $E=\{1,2,\ldots,k\}$ satisfies the MRC for $\seq[x]$. Denote
$$v_n^0:=y_n, \quad n\in\Bbb N.$$
For $j$ from $1$ to $k$ let
\begin{eqnarray}
\label{candual-iterations-alpha} \alpha_n^j:&=&\frac{\la v_n^{j-1},x_j\ra}{1-\la v_j^{j-1},x_j\ra}, \quad n\geq j+1,\\
\label{candual-iterations}v_n^j:&=&v_n^{j-1}+ \alpha_n^j v_j^{j-1}, \quad n\ge j+1.
\end{eqnarray}
Then, for every $j$ from $1$ to $k$, the sequences in \eqref{candual-iterations-alpha} and \eqref{candual-iterations} are well defined, and the sequence $(v_n^j)_{n=j+1}^\infty$ is the canonical dual 
 of $(x_n)_{n=j+1}^\infty.$
\end{prop}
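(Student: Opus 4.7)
The proof should proceed by induction on $j$. The inductive statement at step $j \in \{1,\ldots,k\}$ is that $1 - \langle v_j^{j-1}, x_j\rangle \neq 0$ (so $\alpha_n^j$ is well defined) and $(v_n^j)_{n \geq j+1}$ is the canonical dual of $(x_n)_{n \geq j+1}$. Since $v_n^0 = y_n$ is by hypothesis the canonical dual of $(x_n)_{n=1}^\infty$, it is harmless to take $j=0$ as the base case. For the inductive step, the preliminary observation is that the MRC hypothesis gives $\overline{\rm span}\{x_n : n > k\} = \h$, and since this set is contained in $\{x_n : n \geq j+1\}$ for every $j \leq k$, each truncated sequence $(x_n)_{n \geq j+1}$ spans a dense subspace and is therefore itself a frame for $\h$. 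Denote its frame operator by $S_{j+1}$; it is invertible, and the inductive hypothesis reads $v_n^{j-1} = S_j^{-1} x_n$ for $n \geq j$.

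The key algebraic identity is the rank-one relation $S_{j+1} = S_j - \theta_{x_j, x_j}$, obtained by splitting off the $j$-th summand from the frame operator of $(x_n)_{n \geq j}$. First I would rule out the degenerate case $\langle v_j^{j-1}, x_j\rangle = 1$: if it held, then
\begin{equation*}
S_{j+1} v_j^{j-1} = S_j v_j^{j-1} - \langle v_j^{j-1}, x_j\rangle x_j = x_j - x_j = 0,
\end{equation*}
whereas $v_j^{j-1} = S_j^{-1} x_j \neq 0$ as soon as $x_j \neq 0$, contradicting invertibility of $S_{j+1}$; and the case $x_j = 0$ makes the denominator trivially equal to $1$. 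This gives well-definedness of $\alpha_n^j$ and $v_n^j$.

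It then suffices to verify $S_{j+1} v_n^j = x_n$ for every $n \geq j+1$, which is a direct computation using the same rank-one identity:
\begin{align*}
S_{j+1}\bigl(v_n^{j-1} + \alpha_n^j v_j^{j-1}\bigr)
&= S_j v_n^{j-1} - \langle v_n^{j-1}, x_j\rangle x_j + \alpha_n^j\bigl(S_j v_j^{j-1} - \langle v_j^{j-1}, x_j\rangle x_j\bigr)\\
&= x_n + \bigl(\alpha_n^j\bigl(1 - \langle v_j^{j-1}, x_j\rangle\bigr) - \langle v_n^{j-1}, x_j\rangle\bigr) x_j\\
&= x_n,
\end{align*}
where the last equality uses the defining formula \eqref{candual-iterations-alpha} for $\alpha_n^j$. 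Thus $v_n^j = S_{j+1}^{-1} x_n$, which is exactly the $n$-th canonical dual vector of $(x_n)_{n \geq j+1}$, closing the induction. The only real subtlety is the non-vanishing of the denominator; after that, the argument is essentially a Sherman--Morrison rank-one update of the frame operator inverse, made concrete.
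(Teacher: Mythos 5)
Your proof is correct, and it rests on the same engine as the paper's: an induction driven by the rank-one update $S_{j+1}=S_j-\theta_{x_j,x_j}$ of the truncated frame operators. The differences are in execution and are worth noting. First, for well-definedness of $\alpha_n^j$ the paper rules out $\langle v_j^{j-1},x_j\rangle=1$ by invoking Lemma IX of Duffin--Schaeffer (if the inner product were $1$, deleting $x_j$ would destroy completeness, contradicting the MRC); you instead derive it directly from the invertibility of $S_{j+1}$ via $S_{j+1}v_j^{j-1}=x_j-\langle v_j^{j-1},x_j\rangle x_j$, which is self-contained and arguably cleaner (and you correctly dispose of the degenerate case $x_j=0$). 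Second, the paper identifies $v_n^j$ with the canonical dual by explicitly factoring $S_{j+1}=S_j(I-\theta_{v_j^{j-1},x_j})$ and applying the Sherman--Morrison formula \eqref{xy-inverse} to compute $(I-\theta_{v_j^{j-1},x_j})^{-1}$; you bypass the inverse entirely and just verify $S_{j+1}v_n^j=x_n$ by a two-line computation, which is more elementary. What the paper's route buys is the explicit operator identity $v_n^j=(I-\sum_{i\le j}\theta_{y_i,x_i})^{-1}y_n$ linking the iteration to \eqref{candual-operator} and to the later non-canonical analysis in Proposition~\ref{iterations-alt-dual}; what your route buys is a shorter, citation-free argument. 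One small point of rigor common to both: the claim that each $(x_n)_{n\ge j+1}$ is a frame uses the equivalence of the MRC with the frame property of the reduced family (stated in the introduction via \cite[Theorem~5.4.7]{Cbook}), which you use implicitly and should cite.
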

\begin{proof}
Step $j=1.$ Let us show that the sequence $(v_n^1)_{n=2}^\infty$ is the canonical dual 
 of $(x_n)_{n=2}^\infty.$

In order to compute $\alpha_n^1$ for $n\ge 2$, we first have to verify that $\la y_1,x_1\ra\neq 1.$ 
If we assume that $\la y_1,x_1\ra =1$, then
\cite[Lemma IX]{DS} 
would imply that $(x_n)_{n=2}^\infty$ is not complete in $\h$ and so the set $\{1\}$ would not satisfy the MRC for $(x_n)_{n=1}^\infty$.
This would contradict the assumption of the proposition that $E$ satisfies the MRC for $(x_n)_{n=1}^\infty$, so it has to be that $\la y_1,x_1\ra\neq 1$ and thus we can proceed dealing with $\alpha_n^1$ and $v_n^1$.
For $n\ge 2$ we have
$$v_n^1=y_n+\frac{\la y_n,x_1\ra}{1-\la y_1,x_1\ra}y_1=\left(I+\frac{\theta_{y_1,x_1}}{1-\la y_1,x_1\ra}\right)y_n=\left(I-\theta_{y_1,x_1}\right)^{-1}y_n.$$
Let $S$ be the frame operator for $\seq[x]$. Then we have
\begin{eqnarray*}
\sum_{n\ge 2}\la h,x_n\ra x_n&=&\sum_{n=1}^\infty \la h,x_n\ra x_n-\theta_{x_1,x_1}(h)=(S-\theta_{x_1,x_1})h\\
&=&S(I-\theta_{S^{-1}x_1,x_1})h=S(I-\theta_{y_1,x_1})h
\end{eqnarray*}
for all $h\in\h.$ Therefore, $S(I-\theta_{y_1,x_1})$ is the frame operator for $(x_n)_{n=2}^\infty,$ so the canonical dual 
 of $(x_n)_{n=2}^\infty$ is
$$(S(I-\theta_{y_1,x_1}))^{-1}x_n=(I-\theta_{y_1,x_1})^{-1}S^{-1}x_n=(I-\theta_{y_1,x_1})^{-1}y_n=v_n^1, \quad n\ge 2.$$

Step $j= 2$.
Observe that the way how we obtain  $(v_n^2)$ from $(v_n^1)$ is the same as
the way we obtained $(v_n^1)$ from $(v_n^0).$
Namely, instead of $(x_n)_{n=1}^\infty,$  $(v_n^0)_{n=1}^\infty$, and the set $\{1,2,\ldots,k\}$, we now have $(x_n)_{n=2}^\infty,$  $(v_n^1)_{n=2}^\infty$, and the set $\{2,\ldots,k\}$ (since $(v_n^1)_{n=2}^\infty$ is the canonical dual of  $(x_n)_{n=2}^\infty$  and  $\{2,\ldots,k\}$ satisfies the MRC for $(x_n)_{n=2}^\infty$).
So, we can proceed in the same way as above to obtain the desired conclusions for $j=2$ and then further for $j=3, 4, \ldots, k$.
\end{proof}

\begin{rem}\label{d_implies_a}
The converse of the previous proposition holds in the sense that, if the sequences in \eqref{candual-iterations-alpha} and \eqref{candual-iterations} are well defined for some $j$, then the set $\{1,\ldots,j\}$ satisfies the MRC for $\seq[x].$
Indeed, $(\alpha_n^1)_{n\ge 2}$ (and consequently $(v_n^1)_{n\ge 2}$) is well defined precisely when $\la y_1,x_1\ra\neq 1,$ that is, precisely when $I-\theta_{y_1,x_1}$ is invertible. By \cite[Proposition~2.4]{AB}, $I-\theta_{y_1,x_1}$ is invertible if and only if $\{1\}$ satisfies the MRC for $\seq[x].$
In the same manner we see that $(\alpha_n^2)_{n\ge 3}$ is well defined if and only if $I-\theta_{v_2^1,x_2}$ is invertible, that is, if and only if $\{2\}$ satisfies the MRC for $(x_n)_{n=2}^\infty,$ which obviously happens precisely when $\{1,2\}$ satisfies the MRC for $\seq[x].$
We proceed inductively. \hfill $\Box$
\end{rem}

\section{Construction by using an arbitrary dual frame}\label{section3}

Let $\seq[x]$ be a frame for $\h,$ and let  $E$ be a finite set satisfying the MRC for $\seq[x]$.
In  Section~\ref{section2} we have presented three ways to find the canonical dual 
 $(v_n)_{n\in E^c}$ of $(x_n)_{n\in E^c}$: by inverting a matrix as in \eqref{XVIII}-\eqref{candual-matrix}, by inverting an operator as in \eqref{candual-operator}, and by iterations as in \eqref{candual-iterations}.
Note that for all these three methods,  one starts with the canonical dual 
 $\seq[y]$ of $\seq[x].$
(Of course, one can find $(v_n)_{n\in E^c}$ directly from $(x_n)_{n\in E^c}$ by computing the inverse of the frame operator of $(x_n)_{n\in E^c}$.
However, in 
 an $r$-dimensional space $\h$, this would lead to computing the inverse of a matrix of order $r$, and usually $r$
  is significantly larger than $k$ (the cardinality of $E$), which is the order of the mentioned matrix from \eqref{XVIII}.) 
It is natural to
consider the above mentioned methods but starting with an arbitrary dual frame $\seq[z]$ of $\seq[x]$ instead of the canonical one, inspired by \cite[Proposition 2.8 and Remark 2.9]{AB} and \cite{sc}.
Let us first give an example to motivate our study, that is, to show some differences which occur when turning from the canonical dual to another dual.

\begin{ex}\label{ex1}
Let $\seq[e]$ be an orthonormal basis of $\h.$ Consider the frame
$$\seq[x]=(e_1, e_1, e_1, e_2, e_3, e_4, \ldots)$$
and its (non-canonical) dual frame
$$\seq[z]=(\frac{1}{2}e_1, 0, \frac{1}{2} e_1, e_2, e_3, e_4,\ldots).$$

(a) First observe that the set $E=\{1,3\}$ satisfies the MRC for $\seq[x],$ but not for its dual $\seq[z].$
This cannot happen in the canonical case, since the canonical dual 
 is the image of the initial frame by an invertible operator and thus a finite set $E$ has the MRC for a given frame if and only if $E$ has the MRC for its canonical dual.

(b) The set $\{1\}$ satisfies the MRC for $\seq[x]$ and $\seq[z]$. We can apply formulas as in  \eqref{XVIII} and \eqref{candual-matrix}, replacing the canonical dual $\seq[y]$ with $\seq[z]$. This will give the sequence $(v_n)_{n=2}^\infty=(0,e_1, e_2, e_3, \ldots),$ which is a non-canonical dual 
 of $(x_n)_{n=2}^\infty$.

Doing the same for the set $\{1,2\}$, which also satisfies the MRC for $\seq[x]$ and $\seq[z]$, we obtain the sequence
$(v_n)_{n=3}^\infty=(e_1, e_2, e_3, \ldots),$ which is the canonical dual of $(x_n)_{n=3}^\infty$. This is not a surprise since $(x_n)_{n=3}^\infty$ is a Riesz basis for $\h$, so it has only one dual frame.

Let us remark here that formulas \eqref{XVIII} and \eqref{candual-matrix} will not always be applicable when we replace the canonical dual $\seq[y]$ with another dual $\seq[z]$, even if $E$ satisfies the MRC for $\seq[x]$ and $\seq[z],$ see  Example~\ref{abc_not_d}(a) below. This is in contrast to the
use of the canonical dual, for which \eqref{XVIII} and \eqref{candual-matrix} can be used (by Theorem~\ref{thm2.5}) as soon as $E$ satisfies the MRC for $\seq[x]$.
\hfill $\Box$
\end{ex}

\smallskip

As shown in the above examples,
replacing the canonical dual
by another dual frame in the considerations in Section \ref{section2} does not  lead to the same conclusions in general. The purpose of this section is to investigate the case when an arbitrary dual frame is used and the differences which occur in comparison with the canonical case.

 Let $\seq[x]$ be a frame for $\h$ and let $\seq[z]$ be a dual frame of $\seq[x]$.
Let $E=\{1,2,\ldots,k\}$.
We denote
\begin{equation}\label{matrix-xz}
A_{X,Z,E}=\left[\begin{array}{cccc}
\langle z_{1},x_{1}\rangle&\langle z_{2},x_{1}\rangle&\ldots&\langle z_{k},x_{1}\rangle\\
\langle z_{1},x_{2}\rangle&\langle z_{2},x_{2}\rangle&\ldots&\langle z_{k},x_{2}\rangle\\
\vdots&\vdots& &\vdots\\
\langle z_{1},x_{k}\rangle&\langle z_{2},x_{k}\rangle&\ldots&\langle z_{k},x_{k}\rangle
\end{array}
\right]-I.
\end{equation}
Our main aim is to consider relations between the following statements:
\begin{enumerate}
  \item[(A)] $E$ satisfies the MRC for $\seq[x]$.
  \item[(A')] $E$ satisfies the MRC for $\seq[x]$ and $\seq[z]$.
  \item[(B)] The matrix $A_{X,Z,E}$ is invertible.
  \item[(C)] The operator $I-\sum_{i=1}^k\theta_{z_i,x_i}$ is invertible.
  \item[(D)] Using initialization $v_n^0=z_n,n\in \Bbb N$, the iterations in \eqref{candual-iterations-alpha} and \eqref{candual-iterations} for $j$ from $1$ to $k$ are well defined.
\end{enumerate}

For the case when $\seq[z]$ is the canonical dual
 of $\seq[x]$, all the above five statements are mutually equivalent ((A) and (A') are clearly equivalent, and for the rest see \cite[Proposition~2.4]{AB}, Proposition~\ref{iterations_canonical}, and Remark~\ref{d_implies_a}). Here we show that in the case of an arbitrary dual frame $\seq[z]$ of $\seq[x],$ some of these implications still hold, but not all of them.
We will prove the following theorem.

\begin{thm}\label{mainthm}
Let $\seq[x]$ be a frame for $\h$ and let $\seq[z]$ be a dual frame of $\seq[x]$. Let $E=\{1,2,\ldots,k\}$. Then
\begin{equation}\label{dcba}
\mbox{(D)}\underset{\nLeftarrow}{\Rightarrow}\mbox{(C)}\Leftrightarrow\mbox{(B)}\underset{\nLeftarrow}\Rightarrow\mbox{(A')}\underset{\nLeftarrow}\Rightarrow\mbox{(A)}.
\end{equation}
\end{thm}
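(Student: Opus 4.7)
The overall strategy is to establish the four positive implications and then give counterexamples for the three reversed arrows. The implication (A')$\Rightarrow$(A) is immediate from the definitions. For the non-implications, Example~\ref{ex1}(a) already provides (A)$\not\Rightarrow$(A'), the paper's Example~\ref{abc_not_d}(a) is advertised for (A')$\not\Rightarrow$(B), and for (C)$\not\Rightarrow$(D) I would cook up a small example with $k=2$ that arranges $\langle z_1,x_1\rangle=1$ (making step~$1$ of the iteration undefined) while keeping the determinant of $A_{X,Z,E}$ nonzero --- which, under this constraint, forces both $\langle z_1,x_2\rangle$ and $\langle z_2,x_1\rangle$ to be nonzero.

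The main work is the implication (D)$\Rightarrow$(C). I would prove by induction on $j\in\{0,1,\ldots,k\}$ the stronger statement that when iterations $1,\ldots,j$ are well defined, the partial operator
\[
B_j\;:=\;I-\sum_{i=1}^{j}\theta_{z_i,\,x_i}
\]
is invertible on $\h$ and $v_n^{j}=B_j^{-1}z_n$ for every $n>j$. The base $j=0$ is trivial. For the step, the inductive hypothesis gives $v_j^{j-1}=B_{j-1}^{-1}z_j$, hence $B_{j-1}v_j^{j-1}=z_j$, and consequently $B_{j-1}\theta_{v_j^{j-1},x_j}=\theta_{z_j,x_j}$, which yields the factorization
\[
B_j\;=\;B_{j-1}\bigl(I-\theta_{v_j^{j-1},\,x_j}\bigr).
\]
The assumption that step~$j$ is well defined is exactly $\langle v_j^{j-1},x_j\rangle\neq 1$, so by \eqref{xy-inverse} the second factor is invertible; composition gives invertibility of $B_j$, and applying $B_j^{-1}$ to $z_n$ reproduces \eqref{candual-iterations}, closing the induction. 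Taking $j=k$ is (C).

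For (C)$\Leftrightarrow$(B) my approach is a pull-through argument. Define $V:\mathbb{C}^k\to\h$ by $Vc=\sum_{j=1}^{k}c_j z_j$ and $W:\h\to\mathbb{C}^k$ by $Wh=(\langle h,x_j\rangle)_{j=1}^{k}$; then $\sum_{i=1}^{k}\theta_{z_i,x_i}=VW$ and $WV-I_k=A_{X,Z,E}$. The Sherman--Morrison--Woodbury identity
\[
(I_{\h}-VW)^{-1}\;=\;I_{\h}+V(I_k-WV)^{-1}W
\]
(together with the analogous identity with the roles of $V$ and $W$ swapped) shows that $I_{\h}-VW$ and $I_k-WV$ are invertible simultaneously, which is precisely (C)$\Leftrightarrow$(B).

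Finally I would prove (B)$\Rightarrow$(A') by the contrapositive, applied symmetrically. If $E$ fails MRC for $\seq[x]$, pick $h\neq 0$ orthogonal to $\{x_n:n\in E^c\}$; the expansion $h=\sum_n\langle h,x_n\rangle z_n$ collapses to the finite sum $h=\sum_{i=1}^{k}\langle h,x_i\rangle z_i$, and pairing with $x_1,\ldots,x_k$ shows that $c=(\langle h,x_i\rangle)_i\in\ker A_{X,Z,E}$ must be nonzero (else $h=0$). The MRC of $\seq[z]$ is obtained from the symmetric argument, using that $\seq[x]$ is itself a dual of $\seq[z]$, producing a nonzero element of $\ker A_{X,Z,E}^{*}$. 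I expect the main obstacle not to be any single implication but rather the organization: once one recognizes that (B), (C), the MRC-failure in (A'), and the well-definedness in (D) are all statements about the same finite-rank factorization $VW\leftrightarrow WV$ attached to $A_{X,Z,E}$, each individual implication becomes short.
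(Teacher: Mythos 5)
Your proposal is correct, and for most of the diagram it follows the paper's own route: the induction establishing (D)$\Rightarrow$(C) via the factorization $B_j=B_{j-1}\bigl(I-\theta_{v_j^{j-1},x_j}\bigr)$ is exactly the argument of Proposition~\ref{iterations-alt-dual}, and your Sherman--Morrison--Woodbury pull-through for (B)$\Leftrightarrow$(C) is precisely the content of Lemmas \ref{leminvert} and \ref{invertibility}. The counterexamples you invoke are the ones the paper uses (Example~\ref{ex1}(a) for (A)$\nRightarrow$(A') and Example~\ref{abc_not_d} for the rest); for (C)$\nRightarrow$(D) you correctly isolate the constraints ($\la z_1,x_1\ra=1$ with both off-diagonal entries of $A_{X,Z,E}+I$ contributing a nonzero product to the determinant), but you still owe an explicit pair of mutually dual frames realizing them --- Example~\ref{abc_not_d}(b) does exactly this with $x_1=x_2=e_1$, $z_1=e_1$, $z_2=-\frac12 e_1$, so the gap is routine to close. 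The one genuinely different piece is (B)$\Rightarrow$(A'). The paper proves it constructively through Proposition~\ref{prop1}: invertibility of $I-\sum_{i=1}^k\theta_{z_i,x_i}$ yields $h=\sum_{n\in E^c}\la h,x_n\ra\,\omega_n$ with $\omega_n$ as in \eqref{altdual-operator}, so $(x_n)_{n\in E^c}$ and $(\omega_n)_{n\in E^c}$ are mutually dual frames, and $(z_n)_{n\in E^c}$ is a frame as the image of $(\omega_n)_{n\in E^c}$ under a bounded invertible operator. You instead argue by contraposition, turning a witness of MRC-failure for $\seq[x]$ (resp.\ $\seq[z]$) into a nonzero element of $\ker A_{X,Z,E}$ (resp.\ of $\ker A_{X,Z,E}^{*}$, using the symmetry of duality). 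Both arguments are sound; yours is shorter and treats $X$ and $Z$ symmetrically, while the paper's constructive detour is what actually produces the reconstruction formulas \eqref{altdual-operator}--\eqref{altdual-matrix} that the rest of the paper (and the implementation in Section 4) relies on.
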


The proof of the above theorem is postponed for the end of the section as it will be a collection of several statements and examples.
We will need the following well-known result.

\begin{lemma}\label{leminvert}
For operators $T:\h_1\to\h_2$ and $S:\h_2\to\h_1$, invertibility of $I_{\h_2}-TS$ on $\h_2$ is equivalent to invertibility of $I_{\h_1}-ST$ on $\h_1$, and in the case of invertibility one has that
 $(I_{\h_2}-TS)^{-1}=I_{\h_2} + T(I_{\h_1} -ST)^{-1} S$.
\end{lemma}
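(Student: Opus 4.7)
The plan is to verify the explicit formula directly and then obtain the full equivalence by symmetry. First I would assume that $I_{\h_1}-ST$ is invertible on $\h_1$ and set $R:=(I_{\h_1}-ST)^{-1}$. I would then take the candidate
$$M:=I_{\h_2}+TRS$$
and check that it is a two-sided inverse of $I_{\h_2}-TS$. This is the content that actually needs doing.

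The key algebraic observation for the verification is that the identities $(I_{\h_1}-ST)R=I_{\h_1}=R(I_{\h_1}-ST)$ give
$$STR=R-I_{\h_1}=RST.$$
Expanding $(I_{\h_2}-TS)M=I_{\h_2}+TRS-TS-TSTRS$, the last three terms collapse to $T\bigl(R-I_{\h_1}-STR\bigr)S$, which vanishes by the first of the two displayed identities. The product $M(I_{\h_2}-TS)$ reduces to $T\bigl(R-I_{\h_1}-RST\bigr)S=0$ by the second identity. Thus $M=(I_{\h_2}-TS)^{-1}$ and the claimed formula holds.

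For the converse implication I would simply swap the roles of $T$ and $S$ (and of $\h_1$ and $\h_2$) in what has already been proved: the operators $T':=S\colon\h_2\to\h_1$ and $S':=T\colon\h_1\to\h_2$ satisfy $I_{\h_1}-T'S'=I_{\h_1}-ST$ and $I_{\h_2}-S'T'=I_{\h_2}-TS$, so invertibility of $I_{\h_2}-TS$ forces invertibility of $I_{\h_1}-ST$, with $(I_{\h_1}-ST)^{-1}=I_{\h_1}+S(I_{\h_2}-TS)^{-1}T$ by the same calculation.

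The main obstacle is purely notational: being careful about on which Hilbert space each identity operator and each intermediate product lives, and applying the correct form ($STR=R-I_{\h_1}$ versus $RST=R-I_{\h_1}$) in each of the two expansions. There is no conceptual difficulty; the only nonroutine ingredient is guessing the form of the inverse, and the statement of the lemma already supplies it.
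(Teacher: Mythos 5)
Your proof is correct: the identities $STR=R-I_{\h_1}$ and $RST=R-I_{\h_1}$ do make both products $(I_{\h_2}-TS)M$ and $M(I_{\h_2}-TS)$ collapse to $I_{\h_2}$, and the converse direction follows by the symmetry you describe. The paper itself states this lemma without proof, as a well-known fact, so there is nothing to compare against; your direct verification is the standard argument one would supply.
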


\vspace{.1in}
The proof of the equivalence of (B) and (C) can be done basically in the same way as the proof of \cite[Proposition~2.4 (c)$\Leftrightarrow$(d)]{AB}, but for convenience of the readers we include a short direct proof.

\begin{lemma}\label{invertibility}
Let $E$ be a finite set of indices. Consider arbitrary sets $\seqE[x]$ and $\seqE[z]$ with elements from $\h$.
Then the operator $I-\sum_{i\in E} \theta_{z_i,x_i}$ is invertible on $\h$ if and only if
the corresponding matrix $A_{X,Z,E}$ is invertible.
\end{lemma}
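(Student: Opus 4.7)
The plan is to realize $I_{\h}-\sum_{i\in E}\theta_{z_i,x_i}$ in the factored form $I_{\h}-TS$ for a suitable pair of bounded operators $T,S$ between $\h$ and a finite-dimensional auxiliary space, and then invoke Lemma~\ref{leminvert} to reduce the question to invertibility of a $k\times k$ matrix, where $k=|E|$.

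Concretely, I would set $T:\mathbb{C}^k\to\h$ by $T((c_i)_{i\in E})=\sum_{i\in E}c_i z_i$ and $S:\h\to\mathbb{C}^k$ by $Sh=(\la h,x_i\ra)_{i\in E}$. Both maps are bounded (the first trivially, since its domain is finite-dimensional, and the second because $E$ is finite). A one-line check gives $TSh=\sum_{i\in E}\la h,x_i\ra z_i=\sum_{i\in E}\theta_{z_i,x_i}(h)$, so the operator appearing in the statement is exactly $I_{\h}-TS$. By Lemma~\ref{leminvert}, its invertibility on $\h$ is equivalent to invertibility of $I_{\mathbb{C}^k}-ST$ on $\mathbb{C}^k$.

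Next, I would identify the matrix of $ST$ in the standard basis of $\mathbb{C}^k$. Unwinding the definitions, $ST$ sends $(c_j)_{j\in E}$ to $\big(\sum_{j\in E}\la z_j,x_i\ra c_j\big)_{i\in E}$, so its matrix has $(i,j)$-entry $\la z_j,x_i\ra$, which matches the indexing used in \eqref{matrix-xz}. Consequently, the matrix of $I_{\mathbb{C}^k}-ST$ equals $-A_{X,Z,E}$, whose invertibility is the same as invertibility of $A_{X,Z,E}$. Combining the two equivalences yields the lemma.

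The argument has no genuine obstacle; the only point that needs care is bookkeeping, namely verifying that the rows and columns of $A_{X,Z,E}$ are indexed by the $x$'s and $z$'s in the correct order, and that the overall sign (coming from the $-I$ in the definition of $A_{X,Z,E}$) lines up with the $I_{\mathbb{C}^k}$ that is subtracted from $ST$ when Lemma~\ref{leminvert} is applied.
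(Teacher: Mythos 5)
Your proof is correct and is essentially the paper's own argument: the paper writes the operator as $I-V^*U$ with $U,V:\h\to\Bbb C^k$ the analysis operators of $\seqE[x]$ and $\seqE[z]$ (so your $S=U$ and $T=V^*$), identifies $A_{X,Z,E}$ with the matrix of $UV^*-I$, and applies Lemma~\ref{leminvert}. The indexing and sign bookkeeping you flag are handled the same way there, so there is nothing further to add.
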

\begin{proof}
Without loss of generality, consider $E=\{1,2,\ldots,k\}$.
Let $U,V:\h\to \Bbb C^k$ denote the analysis operators of the Bessel sequences $\seqE[x]$ and $\seqE[z]$ in $\h$, respectively.
Then
$I-\sum_{i\in E} \theta_{z_i,x_i} =I-V^*U\in\Bbb B(\h)$, while $A_{X,Z,E}$ is the matrix representation of $UV^*-I$ in the canonical basis for $\Bbb C^k.$
To complete the proof, apply Lemma \ref{leminvert} to conclude that $I-V^*U$ is invertible if and only if $UV^*-I$ is invertible.
\end{proof}

We now prove that the condition (C) enables us to define a dual frame for $(x_n)_{n\in E^c}$ as in \eqref{candual-operator} and, since (B)$\Leftrightarrow$(C) by the previous lemma, we can also define a dual as in \eqref{candual-matrix}. These two constructions will give the same dual frame.
In particular, this will imply that (C)$\Rightarrow$(A').
Note that another proof of
(B)$\Rightarrow$(A) can be found in \cite[Proposition~2.8]{AB}.

\begin{prop}\label{prop1} Let $\seq[x]$ be a frame for $\h,$ $\seq[z]$ be its dual frame, and $E=\{1,2,\ldots,k\}$.
Assume that $I-\sum_{i=1}^k\theta_{z_i,x_i}$ is invertible on $\h$. Then
 $\seqEc[x]$ is a frame for $\h$ and
the sequence
\begin{equation}\label{altdual-operator}
\omega_n:=(I-\sum_{i=1}^k\theta_{z_i,x_i})^{-1}z_n,\quad n\in E^c,
\end{equation}
is a dual frame of $\seqEc[x]$.
In particular, $E$ satisfies the MRC for $\seq[x]$ and for $\seq[z]$.
Further, if we let
\begin{equation}\label{alpha_alt_dual}
\left[
\begin{array}{cccc}
\alpha_{n1}&
\alpha_{n2}&
\ldots&
\alpha_{nk}
\end{array}
\right]^T=
A_{X,Z,E}^{-1}\left[
\begin{array}{cccc}
\langle z_n,x_{1}\rangle&
\langle z_n,x_{2}\rangle&
\ldots&
\langle z_n,x_{k}\rangle
\end{array}
\right]^T
\end{equation}
for $n\in E^c,$  then
\begin{equation}\label{altdual-matrix}
\omega_n=z_n-\sum_{i=1}^k\alpha_{ni}z_{i},\quad n\in E^c.
\end{equation}
\end{prop}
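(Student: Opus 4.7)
The plan is to use the invertibility of $T := I - \sum_{i=1}^k \theta_{z_i,x_i}$ in $\Bbb B(\h)$ to produce a reconstruction formula, from which all the remaining claims will follow. The starting point is to split the dual-frame expansion of $h$ into erased and preserved parts. Since $\sum_{i=1}^k \theta_{z_i,x_i}(h) = \sum_{i=1}^k \langle h,x_i\rangle z_i$, this yields the key identity
\[
\sum_{n\in E^c}\langle h, x_n\rangle z_n \;=\; h - \sum_{i=1}^k \langle h,x_i\rangle z_i \;=\; Th, \qquad h\in\h.
\]
Applying the bounded operator $T^{-1}$ to both sides and pulling it through the norm-convergent series gives exactly $h = \sum_{n\in E^c}\langle h, x_n\rangle \omega_n$, which is the sought duality.

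Next I would verify the Bessel and frame properties underlying this reconstruction. The sequence $(x_n)_{n\in E^c}$ is Bessel as a subsequence of a Bessel sequence, while $(\omega_n)_{n\in E^c}$ is Bessel because it is the image under the bounded operator $T^{-1}$ of the Bessel sequence $(z_n)$ (the standard computation $\sum|\langle h, T^{-1}z_n\rangle|^2 = \sum|\langle (T^{-1})^*h, z_n\rangle|^2$ delivers the bound $B_z\|T^{-1}\|^2$). A routine Cauchy--Schwarz argument applied to $\|h\|^2 = \sum_{n\in E^c}\langle h,x_n\rangle\langle \omega_n, h\rangle$ then produces the lower frame bound for $(x_n)_{n\in E^c}$, so $(x_n)_{n\in E^c}$ is a frame with dual $(\omega_n)_{n\in E^c}$, and MRC for $(x_n)$ follows. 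For MRC of $(z_n)$ I would use that $(x_n)$ is also a dual of $(z_n)$, so $h = \sum_n\langle h, z_n\rangle x_n$, and run the identical splitting to obtain $\sum_{n\in E^c}\langle h, z_n\rangle x_n = T^*h$; invertibility of $T^*$ then forces $(z_n)_{n\in E^c}$ to be total in $\h$.

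For the matrix formula I would read the equation $T\omega_n = z_n$ as
\[
\omega_n \;=\; z_n + \sum_{i=1}^k \langle \omega_n, x_i\rangle z_i,
\]
set $\beta_i := \langle \omega_n, x_i\rangle$, and take inner products with each $x_j$, $j=1,\ldots,k$. This produces the $k\times k$ linear system $A_{X,Z,E}\,\beta = -[\langle z_n, x_j\rangle]_{j=1}^k$; since $A_{X,Z,E}$ is invertible by Lemma~\ref{invertibility}, the unique solution is $\beta = -\alpha_n$ defined by \eqref{alpha_alt_dual}, and substituting back yields $\omega_n = z_n - \sum_{i=1}^k \alpha_{ni} z_i$. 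The only genuinely non-canonical aspect is the MRC-for-$(z_n)$ step: in the canonical case it was automatic from the relation $z_n = S^{-1}x_n$, whereas here it relies essentially on invertibility of $T^*$. Aside from that, the whole argument is a mechanical unwinding of the operator identity $T\omega_n = z_n$ together with standard dual-frame calculus.
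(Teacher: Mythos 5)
Your proof is correct, and its core is the same as the paper's: both hinge on the splitting identity $\bigl(I-\sum_{i=1}^k\theta_{z_i,x_i}\bigr)h=\sum_{n\in E^c}\langle h,x_n\rangle z_n$ followed by an application of the bounded inverse $T^{-1}$ to obtain $h=\sum_{n\in E^c}\langle h,x_n\rangle\omega_n$, plus the standard observation that two dual Bessel sequences are frames. You diverge from the paper in two secondary steps. For the MRC of $(z_n)_{n\in E^c}$, the paper simply notes that $z_n=T\omega_n$, so $(z_n)_{n\in E^c}$ is the image of the frame $(\omega_n)_{n\in E^c}$ under the invertible operator $T$ and hence a frame; you instead run the same splitting on the expansion $h=\sum_n\langle h,z_n\rangle x_n$ to get $\sum_{n\in E^c}\langle h,z_n\rangle x_n=T^*h$ and deduce totality from invertibility of $T^*$ -- a clean alternative that delivers the MRC directly from its definition, at the cost of not immediately exhibiting frame bounds for $(z_n)_{n\in E^c}$ (which the definition of MRC does not require anyway). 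For the matrix formula, the paper computes $(I-V^*U)^{-1}=I-V^*(UV^*-I)^{-1}U$ via Lemma~\ref{leminvert} and reads off \eqref{altdual-matrix} from matrix representations of $U$, $V^*$, and $(UV^*-I)^{-1}$; you instead rewrite $T\omega_n=z_n$ as $\omega_n=z_n+\sum_i\langle\omega_n,x_i\rangle z_i$, pair with each $x_j$, and solve the resulting $k\times k$ system $A_{X,Z,E}\beta=-[\langle z_n,x_j\rangle]_j$, whose unique solvability you correctly ground in Lemma~\ref{invertibility}. Your route is more elementary and self-contained; the paper's is slightly slicker and reuses the operator identity that also drives the equivalence (B)$\Leftrightarrow$(C). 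Both are complete.
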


\begin{proof}
Observe first that $\seqEc[x]$ and $\seqEc[\omega]$ are Bessel sequences.
 Furthermore, for every $h\in \h$ we have
$$h=\sum_{n=1}^\infty \<h,x_n\>z_n
= \sum_{i\in E} \<h,x_i\>z_i+\sum_{n\in E^c} \<h,x_n\>z_n=
\sum_{i\in E}\theta_{z_i,x_i} (h)  +\sum_{n\in E^c} \<h,x_n\>z_n,$$
and thus
$$(I-\sum_{i\in E}\theta_{z_i, x_i})h= \sum_{n\in E^c} \<h,x_n\> z_n.$$
Then, by \eqref{altdual-operator}, we can write
$$h=\sum_{n\in E^c} \<h,x_n\> \omega_n,\quad h\in \h.$$
This means that  the Bessel sequences $\seqEc[x]$ and $\seqEc[\omega]$ are dual to each other and thus they are frames for $\h$, which implies that $\seqEc[z]$ is also a frame for $\h.$
In particular, $E$ satisfies the MRC for both $\seq[x]$ and $\seq[z]$.

Let us now prove that $\seqEc[\omega]$ can be presented in the form \eqref{altdual-matrix}. First, by Lemma~\ref{invertibility}, $A_{X,Z,E}$ is invertible so the coefficients $\alpha_{ni}$ from \eqref{alpha_alt_dual} are well defined.
Let $U,V:\h\to \Bbb C^k$ denote the analysis operators of the Bessel sequences
$\seqE[x]$ and $\seqE[z]$, respectively. Then,
using Lemma \ref{leminvert},
$$\omega_n=(I-\sum_{i=1}^k\theta_{z_i,x_i})^{-1}z_n
=(I-V^*U)^{-1}z_n=(I-V^*(UV^* - I)^{-1}U)z_n.
$$
Observe that the matrix $A_{X,Z,E}^{-1}$ is the matrix representation of the operator $(UV^*-I)^{-1}$ in the canonical basis for $\Bbb C^k,$ while the vector representation of $Uz_n$ in the canonical basis for $\Bbb C^k$ is  $\left[
\begin{array}{cccc}
\langle z_n,x_{1}\rangle & \langle z_n,x_{2}\rangle& \ldots & \langle z_n,x_{k}\rangle
\end{array}
\right]^T.$ Therefore, by \eqref{alpha_alt_dual},
$$(UV^*-I)^{-1}Uz_n =\left[
\begin{array}{cccc}
\alpha_{n1}&
\alpha_{n2}&
\ldots&
\alpha_{nk}
\end{array}
\right]^T,$$
so for all $n\in E^c$ we get
\begin{eqnarray*}
\omega_n&=&z_n-V^*\left[
\begin{array}{cccc}
\alpha_{n1}&
\alpha_{n2}&
\ldots&
\alpha_{nk}
\end{array}
\right]^T =z_n-\sum_{i=1}^k\alpha_{ni}z_{i},
\end{eqnarray*}
which is \eqref{altdual-matrix}.
\end{proof}

The next example shows that reverses of implications in \eqref{dcba} do not hold.

\begin{ex}\label{abc_not_d}
Let $\seq[e]$ be an orthonormal basis of $\h.$ Consider the frame
$\seq[x]=(e_1, e_1, e_1, e_2, e_3, e_4, \ldots)$ and its (non-canonical) dual frame
$$\seq[z]=(e_1, -\frac{1}{2} e_1, \frac{1}{2}e_1, e_2, e_3, e_4,\ldots).$$

(a) The set $E=\{1\}$ satisfies (A') but not (C) since $I-\theta_{z_1,x_1}=I-\theta_{e_1,e_1}$ is not invertible.

(b) The set $E=\{1,2\}$ satisfies (C), because the operator
$I-\sum_{i\in E}\theta_{z_i,x_i}=I-\frac{1}{2}\theta_{e_1,e_1}$
is invertible on $\h$.  Thus, we can apply Proposition \ref{prop1} to obtain a dual frame of $(x_n)_{n\in E^c}$.
However, if we take $v_n^0=z_n,$ then the first step in \eqref{candual-iterations-alpha} cannot be done since  $\la z_1,x_1\ra=1$, so $\alpha_n^1=\frac{\la z_n,x_1\ra}{1-\la z_1,x_1\ra}$ for $n\geq 2$ is not well defined and thus (D) does not hold.
\end{ex}

\smallskip

While the use of the canonical dual
 guarantees  that the iterative scheme in (\ref{candual-iterations-alpha})-(\ref{candual-iterations}) is well defined (under the assumption for the MRC of $E$),
 Example \ref{abc_not_d} shows that, if we want to do iterations as in (\ref{candual-iterations-alpha})-\eqref{candual-iterations} but starting with an arbitrary dual frame instead of the canonical dual of $(x_n)_{n=1}^\infty,$ we have to be more careful and in each step check whether
 $\<v_j^{j-1},x_j\>$ is 1 - if it is so, then we cannot make this step and the iterative process stops.
In the case when all the iteration steps work, it is natural to pose the question how the obtained iterative sequence relates to $(\omega_n)_{n\in E^c}$ from Proposition~\ref{prop1}; the next proposition clarifies that these two sequences would be the same.

\begin{prop}\label{iterations-alt-dual}
Let $\seq[x]$ be a frame for $\h,$ $\seq[z]$ be its dual frame, and $E=\{1,2,\ldots,k\}$.
Let $$u_n^0:=z_n, \quad n\in\Bbb N.$$ For $j\in E,$ supposing that $(u_n^{j-1})_{n=j}^\infty$ is well defined and that
$\la u_j^{j-1},x_j\ra\neq 1,$
we define
\begin{equation}\label{iter_prop35}
u_n^j:=u_n^{j-1} + \frac{\la u_n^{j-1},x_j\ra}{1-\la u_j^{j-1},x_j\ra} u_j^{j-1}, \quad n\geq j+1.
\end{equation}
Then, for all $j\in E$ for which the sequence $\seqgr[u^j]{j+1}{\infty}$ is well defined, the operator $I-\sum_{i=1}^j \theta_{z_i,x_i}$ is invertible on $\h$ and
\begin{equation}\label{iter2_prop35}
u_n^j=(I-\sum_{i=1}^j \theta_{z_i,x_i})^{-1}z_n,\quad n\ge j+1.
\end{equation}
Furthermore, for  these values of $j$ we also have that  $\seqgr[x]{j+1}{\infty}$ is a frame for $\h$, $\seqgr[u^j]{j+1}{\infty}$ is a dual frame of $\seqgr[x]{j+1}{\infty}$, and the set $\{1,2,\ldots,j\}$ satisfies the MRC for $\seq[x]$ and $\seq[z].$
\end{prop}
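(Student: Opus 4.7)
The plan is an induction on $j$. The base case $j=1$ amounts to applying Proposition~\ref{prop1} with $E=\{1\}$: the assumption that the iteration is well defined at $j=1$ is precisely $\langle z_1,x_1\rangle\neq 1$, which by (\ref{xy-inverse}) makes $I-\theta_{z_1,x_1}$ invertible on $\h$, and (\ref{iter_prop35}) together with (\ref{xy-inverse}) reads
$$u_n^1 \;=\; z_n+\frac{\langle z_n,x_1\rangle}{1-\langle z_1,x_1\rangle}\,z_1 \;=\; (I-\theta_{z_1,x_1})^{-1}z_n,\qquad n\ge 2.$$
Proposition~\ref{prop1} then supplies the frame property of $(x_n)_{n=2}^\infty$, the duality of $(u_n^1)_{n=2}^\infty$, and the MRC of $\{1\}$ for both $(x_n)_{n=1}^\infty$ and $(z_n)_{n=1}^\infty$.

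For the inductive step, the first observation is that, whenever it is well defined, (\ref{iter_prop35}) combined with (\ref{xy-inverse}) is simply
$$u_n^j \;=\; (I-\theta_{u_j^{j-1},x_j})^{-1}u_n^{j-1},\qquad n\ge j+1.$$
By the inductive hypothesis, $(x_n)_{n=j}^\infty$ is a frame for $\h$ with dual $(u_n^{j-1})_{n=j}^\infty$, so I apply Proposition~\ref{prop1} to this pair with the singleton erasure set $\{j\}$. The hypothesis $\langle u_j^{j-1},x_j\rangle\neq 1$ makes $I-\theta_{u_j^{j-1},x_j}$ invertible, and the proposition immediately gives that $(x_n)_{n=j+1}^\infty$ is a frame, that $(u_n^j)_{n=j+1}^\infty$ is a dual of it, and that $\{j\}$ satisfies the MRC for both $(x_n)_{n=j}^\infty$ and $(u_n^{j-1})_{n=j}^\infty$.

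The computational core of the proof is the factorization
$$T_{j-1}\bigl(I-\theta_{u_j^{j-1},x_j}\bigr)\;=\;T_j,\qquad T_m:=I-\sum_{i=1}^{m}\theta_{z_i,x_i}.$$
This reduces to the one-line identity $T_{j-1}\theta_{u_j^{j-1},x_j}=\theta_{T_{j-1}u_j^{j-1},x_j}=\theta_{z_j,x_j}$, where the last equality uses the inductive formula $u_j^{j-1}=T_{j-1}^{-1}z_j$. The factorization gives invertibility of $T_j$ at once, and applying the resulting inverse identity to $z_n$ yields
$$T_j^{-1}z_n \;=\; (I-\theta_{u_j^{j-1},x_j})^{-1}T_{j-1}^{-1}z_n \;=\; (I-\theta_{u_j^{j-1},x_j})^{-1}u_n^{j-1} \;=\; u_n^j,$$
which is (\ref{iter2_prop35}) for $j$.

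Finally, with $T_j$ now known to be invertible, one last application of Proposition~\ref{prop1}, this time to $(x_n)_{n=1}^\infty$, $(z_n)_{n=1}^\infty$ with $E=\{1,\ldots,j\}$, upgrades the singleton MRC statements obtained inductively to the claimed MRC of $\{1,\ldots,j\}$ for the original $(x_n)_{n=1}^\infty$ and $(z_n)_{n=1}^\infty$. The main thing to get right is the factorization identity above; everything else is a clean bookkeeping of what Proposition~\ref{prop1} already provides in its one-erasure form.
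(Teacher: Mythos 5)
Your proof is correct and follows essentially the same route as the paper: an induction on $j$ whose core is the factorization $I-\sum_{i=1}^{j}\theta_{z_i,x_i}=\bigl(I-\sum_{i=1}^{j-1}\theta_{z_i,x_i}\bigr)\bigl(I-\theta_{u_j^{j-1},x_j}\bigr)$, obtained from the identity $T_{j-1}u_j^{j-1}=z_j$ supplied by the inductive formula, followed by an appeal to Proposition~\ref{prop1} for the frame, duality, and MRC claims. Your intermediate application of Proposition~\ref{prop1} to the singleton $\{j\}$ is harmless but redundant, since the final application with $E=\{1,\ldots,j\}$ already yields all of those conclusions.
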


\begin{proof}
First note that when the sequence $(u_n^j)_{n=j+1}^\infty$ in \eqref{iter_prop35} is well defined for some $j\in E$, it assumes that $(u_n^k)_{n=k+1}^\infty$ are well defined for all $k=1,\ldots,j-1.$
In that case, by  \eqref{iter_prop35} and the known statement with respect to \eqref{xy-inverse}, for each $k\in\{1,\ldots,j\}$, the operator $I-\theta_{u_k^{k-1},x_k}$ is invertible and
\begin{equation}\label{step}
u_n^k=(I-\theta_{u_k^{k-1},x_k})^{-1}u_n^{k-1}, \quad n\geq k+1.
\end{equation}

Let us now show the invertibility of  $I-\sum_{i=1}^j \theta_{z_i,x_i}$ and the validity of \eqref{iter2_prop35}, using induction on the values of $j$ in $E$.

If the sequence $(u_n^1)_{n\ge 2}$ in \eqref{iter_prop35} is well defined, then by the observation above we have that  $I- \theta_{z_1,x_1}$ is invertible and
 $u_n^1=(I-\theta_{z_1,x_1})^{-1}z_n$ for all $n\ge 2$, so \eqref{iter2_prop35} holds.

To proceed with induction, assume that the statement is proven for the case when
$(u_n^{j-1})_{n=j}^\infty$ is well defined for some $j\in E$, $j>1$.
Now, suppose that $(u_n^{j})_{n=j+1}^\infty$ is well defined (and thus that  $(u_n^{j-1})_{n=j}^\infty$ is also well defined).
Then using  the induction step $j-1$ and the observation at the beginning of the proof, one can write
\begin{eqnarray*}
I-\sum_{i=1}^{j}\theta_{z_i,x_i} &=&
I-\sum_{i=1}^{j-1}\theta_{z_i,x_i} -\theta_{(I-\sum_{i=1}^{j-1}\theta_{z_i,x_i})u_{j}^{j-1},x_{j}}\\
&=&(I-\sum_{i=1}^{j-1}\theta_{z_i,x_i} )(I-\theta_{u_j^{j-1},x_j})
\end{eqnarray*}
and  conclude that $I-\sum_{i=1}^{j}\theta_{z_i,x_i} $ is invertible on $\h$. Furthermore, for every $n\ge j+1$, we have
\begin{eqnarray*}
u_n^j&\stackrel{\eqref{step}}{=}&(I-\theta_{u_j^{j-1},x_j})^{-1}u_n^{j-1}
=(I-\theta_{u_j^{j-1},x_j})^{-1} (I-\sum_{i=1}^{j-1}\theta_{z_i,x_i})^{-1}z_n\\
&=&(I-\sum_{i=1}^{j}\theta_{z_i,x_i})^{-1}z_n.
\end{eqnarray*}

Now the remaining statements follow from Proposition~\ref{prop1}.
\end{proof}

We now have a complete proof of Theorem~\ref{mainthm}.
\vspace{.1in}

\noindent \emph{Proof of Theorem~\ref{mainthm}}.
By Lemma~\ref{invertibility}, (B)$\Leftrightarrow$(C).
It follows from Proposition~\ref{prop1} that $\mbox{(C)}\Rightarrow\mbox{(A')}$, while Proposition~\ref{iterations-alt-dual} gives  (D)$\Rightarrow$(C).
For (A)$\nRightarrow$(A') see Example~\ref{ex1}, and for (A')$\nRightarrow$(C)$\nRightarrow$(D) see Example~\ref{abc_not_d}.
\qed

\vspace{.1in}
 The next two statements provide classes of dual frames $(z_n)_{n=1}^\infty$ of $(x_n)_{n=1}^\infty$, for which (B), resp. (D), holds.
 We will use the known result (see, e.g., \cite[Theorem 6.3.7]{Cbook}) that all the dual frames $\seq[z]$ of a given frame $\seq[x]$ for $\h$ can be written as
 \begin{equation}\label{zdualfr}
 z_n = y_n + q_n  -\sum_{i=1}^\infty \<y_n , x_i\> q_i, \ n\in \mn,
 \end{equation}
where $\seq[y]$ denotes the canonical dual of $\seq[x]$ and $\seq[q]$ is a Bessel sequence in $\h$.

\begin{prop}\label{prop11}
Let $(x_n)_{n=1}^\infty$ be a frame for $\h$ and let $(y_n)_{n=1}^\infty$ be its canonical dual.
Let $E=\{1,2,\ldots,k\}$ satisfy the MRC for $\seq[x]$ and let $Q=(q_n)_{n=1}^\infty$ be a sequence in $\h$ such that $q_n=0$ for $n\in E^c$. Then the dual frame $\seq[z]$ of $\seq[x]$ determined by (\ref{zdualfr}) satisfies (B) 
if and only if the matrix $A_{X,Q,E}$ is invertible.
\end{prop}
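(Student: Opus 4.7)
My plan is to reduce the claimed equivalence to a $k\times k$ matrix factorisation. Specifically, I will establish the identity
$$A_{X,Z,E} = -A_{X,Q,E}\cdot A_{X,Y,E},$$
where $\seq[y]$ denotes the canonical dual of $\seq[x]$, and then invoke Theorem~\ref{thm2.5} to conclude that $A_{X,Y,E}$ is invertible under the MRC hypothesis on $E$. Once this identity is in hand, the equivalence of invertibility between $A_{X,Z,E}$ and $A_{X,Q,E}$ is immediate from the fact that a product of square matrices is invertible iff both factors are.

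To reach this factorisation, I would first exploit the hypothesis that $q_n=0$ for $n\in E^c$ to rewrite (\ref{zdualfr}) as the finite-sum expression
$$z_n = y_n + q_n - \sum_{\ell\in E}\langle y_n,x_\ell\rangle\, q_\ell,\quad n\in\mn.$$
Taking the inner product with $x_i$ for $i\in E$ and introducing the $k\times k$ matrices $M_y$, $M_q$, $M_z$ with $(i,j)$-entries $\langle y_j,x_i\rangle$, $\langle q_j,x_i\rangle$, $\langle z_j,x_i\rangle$ respectively, one gets
$$(M_z)_{ij} = (M_y)_{ij} + (M_q)_{ij} - \sum_{\ell\in E}(M_q)_{i\ell}(M_y)_{\ell j},$$
where the triple sum has been recognised (after a harmless re-indexing) as the $(i,j)$-entry of the matrix product $M_q M_y$. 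Hence $M_z = M_y + M_q - M_q M_y$, and a routine algebraic rearrangement gives
$$M_z - I = (M_y - I) - M_q(M_y - I) = (I-M_q)(M_y-I),$$
which in the notation of (\ref{matrix-xz}) is precisely $A_{X,Z,E} = -A_{X,Q,E}\cdot A_{X,Y,E}$.

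Since Theorem~\ref{thm2.5}(ii), applied with the canonical dual $\seq[y]$ in the role of the generic dual, guarantees that $A_{X,Y,E}$ is invertible whenever $E$ satisfies the MRC for $\seq[x]$, the factor $A_{X,Y,E}$ on the right is invertible by hypothesis, and consequently $A_{X,Z,E}$ is invertible if and only if $A_{X,Q,E}$ is. The only point requiring care is the bookkeeping of the row/column indices when identifying the triple sum as an entry of $M_q M_y$; beyond that, the factorisation is a one-line algebraic check and I do not anticipate a substantive obstacle.
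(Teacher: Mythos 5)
Your proof is correct. The factorization $A_{X,Z,E}=-A_{X,Q,E}\,A_{X,Y,E}$ does hold: with $M_y,M_q,M_z$ as you define them, the identity $M_z=M_y+M_q-M_qM_y$ follows from \eqref{zdualfr} and $q_n=0$ for $n\in E^c$, whence $M_z-I=(I-M_q)(M_y-I)$, and the invertibility of $A_{X,Y,E}=M_y-I$ under the MRC hypothesis is indeed guaranteed by Theorem~\ref{thm2.5}(ii) (equivalently, by \cite[Proposition~2.4]{AB} together with Lemma~\ref{invertibility}). The underlying idea is the same as the paper's --- factor out the canonical-dual piece, which is invertible because $E$ has the MRC --- but you execute it in a different space. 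The paper writes $z_n=q_n+Ty_n$ with $T=I-\sum_{i=1}^k\theta_{q_i,x_i}$ and factorizes the \emph{operator} $I-\sum_{i=1}^k\theta_{z_i,x_i}=T\,(I-\sum_{i=1}^k\theta_{y_i,x_i})$ on $\h$, then uses Lemma~\ref{invertibility} twice to translate between operator invertibility (condition (C)) and matrix invertibility (condition (B)); you prove the corresponding factorization directly for the $k\times k$ matrices on $\Bbb C^k$ and never need Lemma~\ref{invertibility}. Your route is more elementary and entirely finite-dimensional; the paper's keeps the argument aligned with condition (C) and reuses the operator machinery already in place. One small point worth adding to your write-up, which the paper records in its first line: the phrase ``the dual frame $(z_n)_{n=1}^\infty$ determined by \eqref{zdualfr}'' presupposes that $Q$ is a Bessel sequence, which holds here because $q_n=0$ off the finite set $E$.
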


\bp
First note that $Q$ is a Bessel sequence in $\h$ and thus  $\seq[z]$ is a dual frame of $\seq[x]$. 
By Lemma \ref{invertibility}, $\seq[z]$ satisfies (B)  if and only if 
the operator $I-\sum_{i=1}^k\theta_{z_i,x_i}$ is invertible. 
Now we have
 $$z_n = y_n + q_n  -\sum_{i=1}^k \<y_n , x_i\> q_i=q_n+(I-\sum_{i=1}^k\theta_{q_i,x_i})y_n, \quad n\in \mn.$$
Denote $T:= I-\sum_{i=1}^k\theta_{q_i,x_i}$.
Then
\begin{eqnarray*}
I-\sum_{i=1}^k\theta_{z_i,x_i} &=&
I-\sum_{i=1}^k \theta_{q_i+Ty_i,x_i}
=
I-\sum_{i=1}^k \theta_{q_i,x_i}-\sum_{i=1}^k \theta_{Ty_i,x_i}\\
&=&
T-T\sum_{i=1}^k \theta_{y_i,x_i}
=
T(I -\sum_{i=1}^k \theta_{y_i,x_i}).
\end{eqnarray*}
By \cite[Proposition~2.4]{AB}, the operator $I -\sum_{i=1}^k \theta_{y_i,x_i}$ is invertible.
Therefore,  $I-\sum_{i=1}^k\theta_{z_i,x_i}$ is invertible if and only if $T$ is invertible. 
Finally, by Lemma~\ref{invertibility},  
$ T=I -\sum_{i=1}^k \theta_{q_i,x_i} $ is invertible if and only if the matrix $A_{X,Q,E}$ is invertible. 
\ep

\vspace{.1in}
As a particular simple example of a sequence $Q$, satisfying the conditions of the above proposition, consider for example any sequence $\seq[q]$ with $q_{i_0}\neq 0$ for some ${i_0}\in E$ such that $\<q_{i_0},x_{i_0}\>\neq 1$,
and $q_j=0$ for $j\in \mn\setminus \{i_0\}$.

Note that the matrix $A_{X,Z,E}$ can be invertible
even if a dual frame $\seq[z]$ of $\seq[x]$ is associated to a sequence $\seq[q]$ which does not satisfy the condition  $q_n=0$ for $n\in E^c$.
For example, let $\seq[q]$ be the canonical dual frame $\seq[y]$ of $\seq[x]$. Then for every $E$ with the MRC for $\seq[x]$ it holds that $(q_n)_{n\in E^c}$ is not a zero sequence,  but the matrix $A_{X,Q,E}=A_{X,Y,E}$ is invertible by \cite[Proposition~2.4]{AB}. 
Observe that in this case we have $\seq[z]=\seq[y]$.

\vspace{.1in}
Below we determine a class of dual frames for which the iterative procedure in Proposition \ref{iterations-alt-dual} works for all steps:

\begin{prop}\label{prop12}
Let $\seq[x]$ be a frame for $\h$,  $\seq[y]$ be  its  canonical dual, and let $E=\{1,2,\ldots,k\}$ satisfy the MRC for $\seq[x]$.
Let $(q_n)_{n=1}^\infty$ be a Bessel sequence in $\h$ such that $q_n=0$ for $n\in E^c$ and
$q_n \perp {\rm span} (x_i)_{i\in E}$ for  $n\in E$.
If  $\seq[z]$ is the dual frame of $\seq[x]$ defined  by \eqref{zdualfr}, then the iterative procedure in Proposition~\ref{iterations-alt-dual} works for all $j\in E$.
\end{prop}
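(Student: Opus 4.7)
The plan is to run the iterative procedure in parallel for two starting sequences: our given dual $z_n$, and the canonical dual $y_n$. Denote the iterations starting from $u_n^0 := z_n$ by $(u_n^j)$ (what we want to analyze), and the iterations starting from $w_n^0 := y_n$ by $(w_n^j)$. By Proposition~\ref{iterations_canonical}, the latter is well defined for all $j\in E$, so in particular $\langle w_j^{j-1}, x_j\rangle \neq 1$ for all such $j$. The aim is to show that $\langle u_j^{j-1}, x_j\rangle = \langle w_j^{j-1}, x_j\rangle$ at every step, which forces the denominator $1-\langle u_j^{j-1},x_j\rangle$ to stay nonzero and the iteration for $u$ to keep going.

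The key observation, which exploits both hypotheses on $(q_n)$, is that $\langle z_n, x_j\rangle = \langle y_n, x_j\rangle$ for every $n\in \mn$ and every $j\in E$. Indeed, because $q_n=0$ for $n\in E^c$, the sum in \eqref{zdualfr} collapses to $\sum_{i\in E}\langle y_n,x_i\rangle q_i$, and because $q_i \perp \mathrm{span}(x_\ell)_{\ell\in E}$ for every $i\in E$, all the terms $\langle q_i,x_j\rangle$ with $j\in E$ vanish, as does $\langle q_n,x_j\rangle$ when $n\in E$. Put differently, $z_n - y_n$ lies in $M:=\mathrm{span}(q_i)_{i\in E}$, and $M \perp x_j$ for every $j\in E$.

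I would then prove by induction on $j \in \{0,1,\dots,k-1\}$ the statement: the sequence $(u_n^j)_{n\geq j+1}$ is well defined and $u_n^j - w_n^j \in M$ for all $n \geq j+1$. The base case $j=0$ is exactly the observation above. For the inductive step from $j-1$ to $j$, orthogonality of $M$ to $x_j$ gives $\langle u_n^{j-1}, x_j\rangle = \langle w_n^{j-1}, x_j\rangle$ for every $n\geq j$; in particular $\langle u_j^{j-1}, x_j\rangle \neq 1$, so $\alpha_n^j$ in \eqref{iter_prop35} is defined and moreover equals the corresponding coefficient $\beta_n^j$ appearing in the $w$-iteration. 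Then
\[
u_n^j - w_n^j = (u_n^{j-1} - w_n^{j-1}) + \alpha_n^j\,(u_j^{j-1} - w_j^{j-1}) \in M,
\]
closing the induction.

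The only delicate point is the choice of the right invariant for the induction: working with the pointwise difference $u_n^j - w_n^j$ rather than trying to compute $u_n^j$ directly is what makes everything collapse, since the orthogonality $M \perp x_j$ simultaneously guarantees (a) the iteration coefficients for the two runs coincide, and (b) the difference stays in $M$ after the update. Once this is in hand, the conclusion that the procedure works for all $j\in E$ is immediate.
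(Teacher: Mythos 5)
Your proof is correct and follows essentially the same route as the paper's: both run the iteration in parallel with the canonical-dual iteration of Proposition~\ref{iterations_canonical} and show by induction that $\langle u_j^{j-1},x_j\rangle$ equals the corresponding canonical quantity $\langle v_j^{j-1},x_j\rangle$, which is known to differ from $1$. Your invariant $u_n^j-w_n^j\in M$ with $M\perp x_p$ for $p\in E$ is just a slightly stronger, more geometric packaging of the paper's invariant $\langle u_n^j,x_p\rangle=\langle v_n^j,x_p\rangle$ for $n,p\in\{j+1,\ldots,k\}$, and both inductions close in the same way.
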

\bp
To prove our statement, it is enough to show that
$\<u_j^{j-1}, x_j\>\neq 1$ holds for every $j\in E$, where $u_j^{j-1}$ is as in Proposition~\ref{iterations-alt-dual}.
To do this, we involve the sequences $(v_n^j)_{n=j+1}^\infty$ for $j\in \{0,1,2,\ldots,k\}$ determined by Proposition  \ref{iterations_canonical}.

Let us first show that for every $j\in \{0,1,2,\ldots,k\}$ we have
\begin{equation}\label{uvrelation}
\<u_n^j, x_p\> = \<v_n^j, x_p\>, \quad
n,p\in \{j+1, j+2,\ldots, k\}.
\end{equation}
We prove this by induction on $j$. Let $j=0$. Then
 for every $n,p\in \{1,2,\ldots, k\}$ we have
$$\<u_n^0,x_p\> = \<z_n, x_p\>
 =\<y_n + q_n  -\sum_{i\in E} \<y_n , x_i\> q_i, x_p\>=
 \<y_n, x_p\>=\<v_n^0,x_p\>.$$
Now assume that (\ref{uvrelation}) holds for some  $j<k$, and consider the step $j+1$.
Using (\ref{iter_prop35}), (\ref{candual-iterations-alpha})-(\ref{candual-iterations}), and the induction assumption for step $j$,
  for $n,p\in \{j+2, j+3,\ldots, k\}$  we get
 \begin{eqnarray*}
\<u_n^{j+1},x_p\> &=& \<u_n^j + \frac{\la u_n^j,x_{j+1}\ra}{1-\la u_{j+1}^j,x_{j+1}\ra} u_{j+1}^j, x_p\>\\
& =&
\<v_n^j + \frac{\la v_n^j,x_{j+1}\ra}{1-\la v_{j+1}^j,x_{j+1}\ra} v_{j+1}^j, x_p\>= \<v_n^{j+1},x_p\>.
 \end{eqnarray*}
This proves (\ref{uvrelation}).
 In particular, (\ref{uvrelation}) implies that
 $$\<u_j^{j-1}, x_j\>=\<v_j^{j-1}, x_j\>,\quad j\in E.$$
By Proposition  \ref{iterations_canonical}, $\<v_j^{j-1}, x_j\>\neq 1$ for every $j\in E$, which completes the proof.
\ep

\section{Implementation and computational efficiency}

In this section we examine the computational efficiency of the approaches in Sections~\ref{section2}  and \ref{section3}  in order to justify the relevance of the considered procedures from computational view point.
In the finite-dimensional case, we have implemented the algorithms of Propositions~\ref{prop1} and  \ref{iterations-alt-dual} which provide constructions of a dual frame of the reduced frame $(x_n)_{n\in E^C}$ (resp. Proposition \ref{iterations_canonical} and Theorem \ref{thm2.5}(ii) which provide constructions of the canonical dual frame).
The scripts are available on \sloppy \url{http://dtstoeva.podserver.info/ReconstructionUnderFrameErasures.html} and \url{https://www.nt.tuwien.ac.at/downloads/}. 
The programming is done under the MATLAB environment, using also frame-commands from LTAFT\footnote{The Large Time-Frequency Analysis Toolbox (a Matlab/Octave open source toolbox for dealing with time-frequency analysis and synthesis),
\url{http://ltfat.org/}, see e.g. \cite{ltfatnote030,ltfatnote015}.}.

We tested the efficiency of the scripts for various frames $X=(x_n)_{n=1}^N$ varying the number $N$ of the frame elements, the dimension $r$ of the space, the redundancy $N/r$,
 the cardinality $k$ of the erasure set $E=\{1,2,\ldots,k\}$, as well as the starting dual frame $Z$ of $X$.
The elapsed time recorded in Table 1 is in seconds, measured
 using the MATLAB tic-toc functions.

We compare the time for computing the canonical dual of the reduced frame $(x_n)_{n\in E^c}$ via the code
based on Proposition~\ref{iterations_canonical} ($t_1$ in Table 1),
via the code based on  Theorem \ref{thm2.5}(ii) ($t_2$  in Table 1), and via the pseudo-inverse\footnote{The pseudo-inverse approach is based on the fact that the synthesis operator of the canonical dual of a frame $X$ is the adjoint of the Moore-Penrose pseudoinverse of the synthesis operator of $X$ \cite[Theorem 1.6.6]{Cbook}.}
approach  ($t_3$ in Table 1)  using the MATLAB \emph{pinv}-function\footnote{Our first aim was to do comparison with the LTFAT function \emph{framedual} for computing the canonical dual, but since for general frames \emph{framedual} uses the pseudo-inverse approach calling the \emph{pinv}-function from MATLAB, we compare directly to the \emph{pinv}-function to avoid unnecessary delay.}.
For the same frame $X$, for which the aforementioned tests were performed concerning the canonical dual,
we also compare the time  for computing another dual frame of the reduced frame $(x_n)_{n\in E^c}$ via formula (\ref{altdual-matrix}) in Propositions \ref{prop1}
($t_4$ in Table 1) and via the iterative approach in Proposition \ref{iterations-alt-dual} ($t_5$ in Table 1) starting with a non-canonical dual $Z$ of $X$.
In Table~1 we present some results from the tests - the execution time of the considered procedures and the respective errors. For each procedure in a test, the respective
error $e_i$
is computed using the MATLAB 2-norm function of
the matrix $V^* U-I$, where $U$ denotes the analysis operator of  $(x_n)_{n\in E^c}$ and $V$ denotes  the analysis operator of the constructed dual  frame $(v_n)_{n\in E^c}$ via the considered procedure.
Through the results of the tests, on the one hand, one can compare the performance of the two procedures
 - via inversion of the matrix $A_{X,Z,E}$ and via the iterative algorithm,
and on the other hand, one can compare the performance of the canonical dual versus  another dual frame for the desired constructions.

Concerning Tests 1-8: With $N$ and $r$ chosen by the user, the test-program produces a frame $X=(x_n)_{n=1}^N$ with random elements; then the user can make choices for $k$ until the program verifies that $E$ has the MRC for $X$, and for this set $E$ the program measures $t_1$-$t_3$; 
then a dual frame $Z_1$ of $X$ is randomly chosen and $t_4(Z_1)$ and $t_5(Z_1)$ 
are measured;
and also another dual frame $Z_2$ of $X$ is randomly chosen and the respective $t_4(Z_2)$ and $t_5(Z_2)$ are measured. 

Concerning Test 9:
We took an explicit simple frame $X$, its canonical dual $Y$, and an explicit simple non-canonical dual frame $Z_1$ of $X$ (aiming to include a case where Propositions \ref{iterations-alt-dual} and \ref{prop1} do not apply), while the user can make choices for $k$ until the program verifies that $E$ has the MRC for $X$.  For this set $E$ the program measures $t_1$-$t_3$ and $t_4(Z_1)$-$t_5(Z_1)$ (we have chosen a value of $k$ such that the iterative procedure of Proposition \ref{iterations-alt-dual} cannot be completed and where the MRC does not hold for $Z_1$ so Proposition \ref{prop1} does not apply), and finally another dual frame $Z_2$ of $X$ is randomly chosen and the respective times $t_4(Z_2)$-$t_5(Z_2)$ are measured.

\begin{table}[h!] \label{tab:table1new}
  \begin{center}
        {\tiny
          \begin{tabular}{|l|r|r|r|r|r|r|r|r|r|}
         \hline
       \ & Test 1  & Test 2 & Test 3 & Test 4 & Test 5 & Test 6 & Test 7 & Test 8 & Test 9\\
          \hline
     \hspace{-.09in}  $N$\hspace{-.09in}  & 6000 & 6000 & 6000 & 7000 & 5000 & 8000 & 8000 & 8000 & 
      3010 \\
               \hline
     \hspace{-.09in}  $r$ \hspace{-.09in} & 4000 &4000& 4000 & 4000 & 4000 & 200  & 2000 & 6000 & 3000 \\
               \hline
     \hspace{-.09in}   $k$ \hspace{-.09in} & 200& 300 & 500 & 50 & 200 & 80 & 200& 500 & 4 \\
   \Xhline{1pt}

  \hspace{-.09in}   $t_1$ \hspace{-.09in} 
       & \hspace{-.05in}    \textbf{32.1367} \hspace{-.09in} 
       &  \hspace{-.05in}  \textbf{\textcolor{blue}{48.4165}} \hspace{-.09in}
       &  \hspace{-.05in}  79.8354 \hspace{-.09in}
        &  \hspace{-.05in}   10.5032 \hspace{-.09in}
         & \hspace{-.05in}   \textbf{26.3473} \hspace{-.09in}
         &  \hspace{-.05in}  \textbf{6.2474} \hspace{-.09in}
         &  \hspace{-.05in}  15.2432 \hspace{-.09in}
         &  \hspace{-.05in}  \textbf{147.4556} \hspace{-.09in}
         & \hspace{-.07in}     \textbf{0.3145}  \hspace{-.09in}    \\
               \hline

   \hspace{-.09in}  $t_2$  \hspace{-.09in}  
       & \hspace{-.05in}   36.9786 \hspace{-.09in}
       &  \hspace{-.05in}  74.8509 \hspace{-.09in}
       &  \hspace{-.05in}  80.7513 \hspace{-.09in}
        & \hspace{-.05in}    \textbf{ 8.8483} \hspace{-.09in}
        & \hspace{-.05in}   29.7811 \hspace{-.09in}
        & \hspace{-.05in}   6.4447 \hspace{-.09in}
         & \hspace{-.05in}   15.2507 \hspace{-.09in}
         &  \hspace{-.05in}  155.4476 \hspace{-.09in}
         &  \hspace{-.07in}    0.3407 \hspace{-.09in}  \\
               \hline

    \hspace{-.09in}  $t_3$  \hspace{-.09in}  
       &  \hspace{-.05in}  80.1901 \hspace{-.09in}
       &   \hspace{-.05in}       141.7107 \hspace{-.09in}
       & \hspace{-.05in}   \textbf{\textcolor{blue}{77.0104}} \hspace{-.09in}
       &  \hspace{-.05in}   65.2870 \hspace{-.09in}
       &  \hspace{-.05in}  63.9953 \hspace{-.09in}
       &  \hspace{-.05in}     11.5692 \hspace{-.09in}
       & \hspace{-.05in}   \textbf{\textcolor{blue}{11.7546}} \hspace{-.09in}
       &  \hspace{-.05in}  374.6021 \hspace{-.09in}
        &  \hspace{-.07in}  22.6884 \hspace{-.09in}  \\
                \Xhline{0.9pt}

 \hspace{-.09in}  $t_4(Z_1)$  \hspace{-.09in}
       & \hspace{-.05in}    \textbf{\textcolor{blue}{31.9943}} \hspace{-.09in}
       &    \hspace{-.05in}      \textbf{69.8041} \hspace{-.09in}
       &  \hspace{-.05in}  \textbf{79.6817} \hspace{-.09in}
       &  \hspace{-.05in}      9.5098 \hspace{-.09in}
        &  \hspace{-.05in}     26.3567 \hspace{-.09in}
        &  \hspace{-.05in}   6.1913 \hspace{-.09in}
        & \hspace{-.05in}   \textbf{15.2210} \hspace{-.09in}
        &  \hspace{-.05in}   149.8696 \hspace{-.09in} 
           &  \hspace{-.07in}  stop at j=3 \hspace{-.09in}  \\
               \hline

  \hspace{-.09in}  $t_5(Z_1)$  \hspace{-.09in}
       &  \hspace{-.05in}  32.0460 \hspace{-.09in}
       &  \hspace{-.05in}  77.7833 \hspace{-.09in}
       & \hspace{-.05in}   81.3535 \hspace{-.09in}
       & \hspace{-.05in}    \textbf{9.2100} \hspace{-.09in}
       &  \hspace{-.05in}  \textbf{26.3251} \hspace{-.09in}
       & \hspace{-.05in}   \textbf{\textcolor{blue}{6.0000}} \hspace{-.09in}
        & \hspace{-.05in}     15.3378 \hspace{-.09in}
        &  \hspace{-.05in}  \textbf{\textcolor{blue}{144.2378}} \hspace{-.09in} 
    &   \hspace{-.07in}  no\,Prop.\,3.5 \hspace{-.09in} \\
                           \Xhline{0.9pt}

 \hspace{-.09in}  $t_4(Z_2)$  \hspace{-.09in}
       & \hspace{-.05in}    36.4463 \hspace{-.09in}
       &  \hspace{-.05in}  70.1102 \hspace{-.09in}
       & \hspace{-.05in}   80.9745 \hspace{-.09in}
       &  \hspace{-.05in}   11.0663 \hspace{-.09in}
	    & \hspace{-.05in}    \textbf{\textcolor{blue}{26.2390}} \hspace{-.09in}
       & \hspace{-.05in}   7.0013 \hspace{-.09in}
        & \hspace{-.05in}   \textbf{15.3719} \hspace{-.09in}
        & \hspace{-.05in}   151.9858 \hspace{-.09in}
        & \hspace{-.07in}    0.2891 \hspace{-.09in}  \\
               \hline

 \hspace{-.09in}  $t_5(Z_2)$  \hspace{-.09in}
       & \hspace{-.05in}    \textbf{32.1301}  \hspace{-.09in}
       & \hspace{-.05in}   \textbf{57.7437} \hspace{-.09in}
       & \hspace{-.05in}   \textbf{ 79.2500} \hspace{-.09in}
       &  \hspace{-.05in}   \textbf{\textcolor{blue}{8.6892}} \hspace{-.09in}
        &  \hspace{-.05in}  30.0706 \hspace{-.09in}
        &  \hspace{-.05in}  \textbf{6.0525} \hspace{-.09in}
        &  \hspace{-.05in}  17.9037 \hspace{-.09in}
        &  \hspace{-.05in}  \textbf{150.2319} \hspace{-.09in}
        &   \hspace{-.07in}  \textbf{\textcolor{blue}{0.2522}} \hspace{-.09in}  \\
       \Xhline{0.9pt}

   \hspace{-.09in}   $e_1$  \hspace{-.09in}
       & \hspace{-.05in}   6.3344e-14 \hspace{-.09in}
       & \hspace{-.05in}   7.7079e-14 \hspace{-.09in}
       &  \hspace{-.05in}   1.1377e-13 \hspace{-.09in}
       &  \hspace{-.05in}  3.0507e-14 \hspace{-.09in}
       &  \hspace{-.05in}  2.1044e-13 \hspace{-.09in}
       & \hspace{-.05in}   1.5939e-14 \hspace{-.09in}
       &  \hspace{-.05in}  2.0370e-14 \hspace{-.09in}
       &  \hspace{-.05in}  2.1916e-13 \hspace{-.09in}
       &  \hspace{-.07in}   2.2204e-16 \hspace{-.09in} \\
               \hline

  \hspace{-.09in}   $e_2$  \hspace{-.09in}
       & \hspace{-.05in}   6.2526e-14 \hspace{-.09in}
       &  \hspace{-.05in}  7.6374e-14 \hspace{-.09in}
       &  \hspace{-.05in}   1.1156e-13 \hspace{-.09in}
       &  \hspace{-.05in}  3.0589e-14 \hspace{-.09in}
       &  \hspace{-.05in}    2.0469e-13 \hspace{-.09in}
        &  \hspace{-.05in}    1.5934e-14 \hspace{-.09in}
        &  \hspace{-.05in}  2.0358e-14 \hspace{-.09in}
        &  \hspace{-.05in}     2.1441e-13 \hspace{-.09in}
         &  \hspace{-.07in}    2.2204e-16 \hspace{-.09in}  \\
               \hline

   \hspace{-.09in}  $e_3$  \hspace{-.09in}
       &  \hspace{-.05in}    2.7159e-14 \hspace{-.09in}
       & \hspace{-.05in}   3.3145e-14 \hspace{-.09in}
        &   \hspace{-.05in}   3.2635e-14 \hspace{-.09in}
        &   \hspace{-.05in}    2.9561e-14 \hspace{-.09in}
        &  \hspace{-.05in}   4.1709e-14 \hspace{-.09in}
        & \hspace{-.05in}    1.4724e-14 \hspace{-.09in}
        &  \hspace{-.05in}  1.4372e-14 \hspace{-.09in}
        & \hspace{-.05in}   4.5437e-14 \hspace{-.09in}
        & \hspace{-.07in}   2.2204e-16 \hspace{-.09in} \\
               \hline

 \hspace{-.09in}  $e_4(Z_1)$  \hspace{-.09in}
       &  \hspace{-.05in}   2.8838e-04 \hspace{-.09in}
       &  \hspace{-.05in}  0.0010 \hspace{-.09in}
       &  \hspace{-.05in}    8.2355e-04 \hspace{-.09in}
       &  \hspace{-.05in}   1.5013e-05 \hspace{-.09in}
       & \hspace{-.05in}   2.5971e-04 \hspace{-.09in}
       & \hspace{-.05in}   4.5256e-06 \hspace{-.09in}
       &    \hspace{-.05in}  1.4281e-04 \hspace{-.09in}
        &   \hspace{-.05in}  2.3799e-04 \hspace{-.09in}
        &  \hspace{-.07in}   no \hspace{-.05in}  \\
               \hline

  \hspace{-.09in}  $e_5(Z_1)$  \hspace{-.09in}
       &   \hspace{-.05in}  6.8226e-06 \hspace{-.09in}
       &  \hspace{-.05in}  2.4236e-05 \hspace{-.09in}
       &  \hspace{-.05in}  4.8157e-06 \hspace{-.09in}
       &   \hspace{-.05in}   1.3170e-06 \hspace{-.09in}
       &  \hspace{-.05in}   8.7436e-07 \hspace{-.09in}
       &  \hspace{-.05in}  5.6164e-07 \hspace{-.09in}
        & \hspace{-.05in}   1.7502e-06 \hspace{-.09in}
        & \hspace{-.05in}   9.6339e-06 \hspace{-.09in}
        & \hspace{-.07in}   no \hspace{-.05in}  \\
               \hline

 \hspace{-.09in}  $e_4(Z_2)$  \hspace{-.09in}
       & \hspace{-.05in}   2.2986e-05 \hspace{-.09in}
       & \hspace{-.05in}  1.4722e-04 \hspace{-.09in}
        &  \hspace{-.05in}    0.0016 \hspace{-.09in}
        &  \hspace{-.05in}   2.0118e-06 \hspace{-.09in}
         &  \hspace{-.05in}   3.0671e-04 \hspace{-.09in}
         &   \hspace{-.05in}  4.0523e-05 \hspace{-.09in}
          &  \hspace{-.05in}  3.3912e-04 \hspace{-.09in}
          &   \hspace{-.05in}  0.0306 \hspace{-.09in}
           &  \hspace{-.07in}   4.4474e-13 \hspace{-.09in} \\
               \hline

  \hspace{-.09in}  $e_5(Z_2)$  \hspace{-.09in}
       &  \hspace{-.05in}    1.7899e-06 \hspace{-.09in}
       & \hspace{-.05in}   3.2424e-06 \hspace{-.09in}
       &  \hspace{-.05in}    1.9454e-05 \hspace{-.09in}
       &  \hspace{-.05in}    6.4818e-07 \hspace{-.09in}
       &   \hspace{-.05in}   7.2992e-06 \hspace{-.09in}
       & \hspace{-.05in}    4.8560e-07 \hspace{-.09in}
       &  \hspace{-.05in}  1.0330e-05 \hspace{-.09in}
       & \hspace{-.05in}  3.9119e-06 \hspace{-.09in}
       & \hspace{-.07in}  5.6066e-13 \hspace{-.09in} \\
               \hline

    \end{tabular}
    }
    \caption{Tests for the algorithms based on Propositions~\ref{iterations_canonical},  Theorem~\ref{thm2.5}(ii), and the pseudo-inverse approach through the Matlab pinv-function (used in LTFAT) for construction of the canonical dual of the reduced frame (lines $t_1-t_3$ in the table), as well as tests for the algorithms based on Propositions
    \ref{prop1} and \ref{iterations-alt-dual} for construction of a dual of the reduced frame (lines $t_4(Z_1)-t_5(Z_1)$ and lines $t_4(Z_2)-t_5(Z_2)$ in the table). In each test, the shortest executed time is marked in blue, and for each of the three sub-groups $t_1-t_3$, $t_4(Z_1)-t_5(Z_1)$, and $t_4(Z_2)-t_5(Z_2)$ of the respective test, the shortest time in the sub-group is marked with bold style.\
        The scripts used to produce the tests reflected in Table 1 are available on 
 http://dtstoeva.podserver.info/ReconstructionUnderFrameErasures.html and https://www.nt.tuwien.ac.at/downloads/. 
    }
   \end{center}
\end{table}

The tests reflected in Table 1 show the following:
in certain cases, the iterative algorithm in Proposition \ref{iterations_canonical} outperforms Theorem \ref{thm2.5}(ii) (Tests 1-3,5-9)
and the converse also holds (Test 4); in certain cases, both of these approaches perform  (significantly) faster then
the pseudo-inverse approach using the MATLAB pinv-function  (Tests 1,2,4,5,8,9)  and hence also faster than LTFAT for general frames;
in certain cases, some non-canonical duals provide a faster procedure in comparison to the use of the canonical dual, i.e., in certain cases Proposition \ref{iterations-alt-dual} or Proposition \ref{prop1} performs faster than Proposition \ref{iterations-alt-dual} or Theorem \ref{thm2.5}(ii) (Tests 1,4-6,8-9);
the execution time of the algorithms depends also much on the values of $k$, $N$, $r$, $N/r$.

In conclusion, we may say that when we have a dual frame for $(x_n)_{n=1}^N,$ the canonical one or any other,
the algorithms presented in this paper can be efficient for computing a dual frame for the reduced frame $(x_n)_{n\in E^C}$ and can be used to enrich and improve LTFAT. In certain cases, the iterative algorithm  outperforms the procedure involving a matrix inversion, which justifies its consideration. 
In addition to this fact, let us also note that
 the iterative algorithm provides simultaneously a dual frame for all the erasure sets $E_j=\{1,2,\ldots,j\}$, $j=1,2,\ldots,k$, which gives flexibility for simultaneous use of multiple erasure sets.
In certain cases, the use of a non-canonical dual outperforms the use of the canonical dual,
which also justifies the interest to non-canonical duals, in addition to the motivating arguments in the introduction.
The
size of the frame,  its redundancy, the dimension of the spaces, and the cardinality of the erasure set,
may have significant influence on the execution times of the considered procedures.
Further tests and deeper investigation of appropriate dual frames and
optimal values of the aforementioned parameters for efficient computational purposes of each method will be the task of a future work.

\section{Appendix}

Here we provide short pseudocodes of the scripts that were used to produce the tests reflected in Table 1.

\vspace{.1in}\noindent

\begin{itemize}
  \item[I.] Short pseudocode of the script in Table1Tests1til8.m used for producing Tests 1-8:
\begin{enumerate}
  \item Initialize LTFAT in order to use some LTFAT-functions.
  \item Input of $N$ and $r$ by the user.
  \item Random choice of the synthesis matrix $TX$ (size $r \times N$) of a frame $X=(x_n)_{n=1}^N$.
  \item Input of $k$ by the user.
  \item  Verify whether the set $E = \{1,2,\ldots,k\}$ satisfies the MRC for $X$.
\begin{enumerate}
  \item[] If No, new input of $k$ by the user is required.
  \item[] If Yes, the script continues.
\end{enumerate}
 \item  Determine the synthesis matrix $TY$ of the canonical dual of $X$. 
\item Random choice of the synthesis matrix $TZ1$ of a dual frame of $X$.
\item  Random choice of the synthesis matrix $TZ2$ of another dual frame of $X$.
\item  Measure $t_1, t_2, e_1,$ and $e_2$ through the function $CompareSpeedDual$
that involves the codes based on Proposition~\ref{iterations_canonical} and Theorem \ref{thm2.5}(ii).
\item  Measure $t_3$  using the MATLAB function $pinv$ and the respective error $e_3$.
\item  Measure $t_4(Z_1), t_5(Z_1), e_4(Z_1)$, and $e_5(Z_1)$  through the function $CompareSpeedDual$
that involves the codes  based on Propositions \ref{iterations-alt-dual} and \ref{prop1}.
\item  Measure $t_4(Z_2), t_5(Z_2), e_4(Z_2)$, and $e_5(Z_2)$ as in 11.
\item Visualize the values of $t_1$-$t_3$, $t_4(Z_1)$, $t_5(Z_1)$, $t_4(Z_2)$, $t_5(Z_2)$, and the respective errors $e_1$-$e_3$, $e_4(Z_1)$, $e_5(Z_1)$, $e_4(Z_2)$, and $e_5(Z_2)$. 
\end{enumerate}
\item[II.] Short pseudocode of the script in Table1Test9.m used for producing the test reflected in Table 1 Tests 9:

\begin{enumerate}
  \item  Initialize LTFAT in order to use some LTFAT-functions.
   \item Determine $N$ and $r$ (fixed in the script, but can be easily changed by the user in the code for further tests).
\item  Determine the synthesis matrix $TX$ of a specific frame $X$.
\item[4.-6.] The same as in the pseudocode above.
\item [7.] Determine the synthesis matrix $TZ1$ of a specific non-canonical dual frame of $X$.
\item [8.-13.] The same as in the pseudocode above.
\end{enumerate}
\end{itemize}


\section*{Acknowledgement}
The authors are grateful to the reviewers for the valuable comments and suggestions.
The authors are supported by the Scientific and Technological Cooperation project Austria--Croatia ``Frames, Reconstruction, and Applications'' (HR 03/2020).
The first author was also supported by the Croatian Science Foundation under the project IP-2016-06-1046.
The second author is grateful for the hospitality of the Department of Mathematics (Faculty of Science, University of Zagreb) during her visit.
She also acknowledges support from the Austrian Science Fund (FWF) under grants P 32055-N31 and  Y 551-N13, and from the
Vienna Science and Technology Fund (WWTF) through the project VRG12-009.


\begin{thebibliography}{999}
\bibitem{ACM} B. Alexeev, J. Cahill, D. Mixon, \emph{Full spark frames}, J. Fourier Anal. Appl. 18 (2012), no. 6, 1167--1194.

\bibitem{AB} Lj. Aramba\v si\' c, D. Baki\' c,  \emph{Dual frames compensating for erasures}, Glasnik matemati\v cki, 52(1) 2017, 131--146.

\bibitem{ABfull} Lj. Aramba\v si\' c, D. Baki\' c, \emph{Full spark frames and totally positive matrices}, Linear Multilinear A., 67 (8) (2019), 1685--1700.

\bibitem{BGS} S. Bannert, K. Gr\"{o}chenig,  J. St\"{o}ckler,  \emph{Discretized Gabor frames of totally positive functions}, IEEE Trans. Inf. Theory,
60 (1) (2014), 159--169.

\bibitem{CK} P.G. Casazza, J. Kova\v cevi\' c, \emph{Equal-norm tight frames with erasures}, Adv. Comp. Math., 18 (2003), 387--430.

\bibitem{CKu} P.G. Casazza, J. Kova\v cevi\' c, \emph{Finite frames}, in \emph{Applied and Numerical Harmonic Analysis} (Theory and Applications), Springer, 2013.

\bibitem{CGabor}  O. Christensen,  \emph{Pairs of dual Gabor frames with compact support and desired frequency localization}, Appl. Comput. Harmon. Anal., 20 (2006), 403--410.

\bibitem{Cbook} O. Christensen,   \emph{An Introduction to Frames and Riesz Bases}, Second Expanded Edition, Series: Applied and Numerical Harmonic Analysis, Birkh\"auser, Boston, 2016.

\bibitem{CKdualGabor} O. Christensen, R.Y.  Kim, \emph{On dual Gabor frame pairs generated by polynomials}. J. Fourier Anal. Appl. 16, 1--16 (2010).

\bibitem{CKK} O. Christensen, H.O. Kim, R.Y. Kim, \emph{Gabor windows supported on $[-1, 1]$ and compactly supported dual windows}, Appl. Comput. Harmon. Anal., 28 (2010), 89--103.



\bibitem{DS} R.J. Duffin, A.C. Schaeffer,  \emph{A class of nonharmonic Fourier series}, Trans. Am. Math. Soc., 72 (1952), 341--366.

\bibitem{GJ} K. Gr\"{o}chenig, J. St\"{o}ckler, \emph{Gabor frames and totally positive functions}, Duke Math. J., 162 (6) (2013), 1003--1031.

\bibitem{HL} D. Han, D.R. Larson, \emph{Frames, bases and group representations}, Memoirs Amer. Math. Soc. 697 (2000), 1--94.

\bibitem{HanSun} D. Han, W. Sun, \emph{Reconstruction of signals from frame coefficients with erasures at unknown locations},
IEEE Trans. Inf. Theory 60(7) (2014), 4013--4025.

\bibitem{Hbook} C. Heil,  \emph{A Basis Theory Primer.} Expanded ed., Birkh\"auser, Basel, 2011.

\bibitem{HP} R. Holmes, V.I. Paulsen, \emph{Optimal frames for erasures}, Linear Algebra Appl. 377 (2004), 31--51.

\bibitem{KC} J. Kova\v cevi\' c, A. Chebira, \emph{Life beyond bases: The advent of frames}, IEEE Signal Process. Mag., 24(4) (2007), 86--104.

\bibitem{LS} D. Larson, S. Scholze, \emph{Signal reconstruction from frame and sampling erasures}, J. Fourier Anal. Appl. 21 (2015), no. 5, 1146--1167.

\bibitem{LH} J. Leng, D. Han, \emph{Optimal dual frames for erasures II}, Linear Algebra Appl. 435 (2011), 1464--1472.

\bibitem{LoH} J. Lopez, D. Han, \emph{Optimal dual frames for erasures}, Linear Algebra Appl. 432 (2010), 471--482.

\bibitem{ltfatnote030}
Z. Pr\r{u}\v{s}a, P. L. S{\o}ndergaard, N. Holighaus, Ch. Wiesmeyr, P. Balazs,
 \emph{The Large Time-Frequency Analysis Toolbox 2.0}.
 In: Aramaki M., Derrien O., Kronland-Martinet R., Ystad S. (eds) Sound, Music, and Motion. CMMR 2013. Lecture Notes in Computer Science, vol 8905 (2014), 419--442.
 

\bibitem{ltfatnote015}
P. L. S{\o}ndergaard, B. Torr\'esani, P. Balazs,
\emph{The Linear Time-Frequency Analysis Toolbox}.
Int. J. Wavelets Multiresolution Inf. Process.
 10(4) (2012), 1250032.


\bibitem{sc}  D. T. Stoeva, \emph{On compactly supported dual frames of Gabor frames}. J. Math. Anal. Appl. 505(1) (2022), 125436.

\end{thebibliography}
\end{document}